\title[Global Torelli theorem]{A global Torelli theorem for certain Calabi-Yau threefolds}
\author[Mao Sheng]{Mao Sheng}
\author[Jinxing Xu]{Jinxing Xu}
\email{msheng@ustc.edu.cn}\email{xujx02@ustc.edu.cn}
\address{School of Mathematical Sciences,
University of Science and Technology of China, Hefei, 230026, China}
\begin{document}

\theoremstyle{plain}
\newtheorem{thm}{Theorem}[section]
\newtheorem{theorem}[thm]{Theorem}
\newtheorem*{theorem*}{Theorem}
\newtheorem{lemma}[thm]{Lemma}
\newtheorem{corollary}[thm]{Corollary}
\newtheorem{proposition}[thm]{Proposition}
\newtheorem{addendum}[thm]{Addendum}
\newtheorem{variant}[thm]{Variant}
\theoremstyle{definition}
\newtheorem{construction}[thm]{Construction}
\newtheorem{notations}[thm]{Notations}
\newtheorem{question}[thm]{Question}
\newtheorem{problem}[thm]{Problem}
\newtheorem{remark}[thm]{Remark}
\newtheorem{remarks}[thm]{Remarks}
\newtheorem{definition}[thm]{Definition}
\newtheorem{claim}[thm]{Claim}
\newtheorem{assumption}[thm]{Assumption}
\newtheorem{assumptions}[thm]{Assumptions}
\newtheorem{properties}[thm]{Properties}
\newtheorem{example}[thm]{Example}
\newtheorem{conjecture}[thm]{Conjecture}
\numberwithin{equation}{thm}

\newcommand{\sA}{{\mathcal A}}
\newcommand{\sB}{{\mathcal B}}
\newcommand{\sC}{{\mathcal C}}
\newcommand{\sD}{{\mathcal D}}
\newcommand{\sE}{{\mathcal E}}
\newcommand{\sF}{{\mathcal F}}
\newcommand{\sG}{{\mathcal G}}
\newcommand{\sH}{{\mathcal H}}
\newcommand{\sI}{{\mathcal I}}
\newcommand{\sJ}{{\mathcal J}}
\newcommand{\sK}{{\mathcal K}}
\newcommand{\sL}{{\mathcal L}}
\newcommand{\sM}{{\mathcal M}}
\newcommand{\sN}{{\mathcal N}}
\newcommand{\sO}{{\mathcal O}}
\newcommand{\sP}{{\mathcal P}}
\newcommand{\sQ}{{\mathcal Q}}
\newcommand{\sR}{{\mathcal R}}
\newcommand{\sS}{{\mathcal S}}
\newcommand{\sT}{{\mathcal T}}
\newcommand{\sU}{{\mathcal U}}
\newcommand{\sV}{{\mathcal V}}
\newcommand{\sW}{{\mathcal W}}
\newcommand{\sX}{{\mathcal X}}
\newcommand{\sY}{{\mathcal Y}}
\newcommand{\sZ}{{\mathcal Z}}
\newcommand{\A}{{\mathbb A}}
\newcommand{\B}{{\mathbb B}}
\newcommand{\C}{{\mathbb C}}
\newcommand{\D}{{\mathbb D}}
\newcommand{\E}{{\mathbb E}}
\newcommand{\F}{{\mathbb F}}
\newcommand{\G}{{\mathbb G}}
\newcommand{\HH}{{\mathbb H}}
\newcommand{\I}{{\mathbb I}}
\newcommand{\J}{{\mathbb J}}
\renewcommand{\L}{{\mathbb L}}
\newcommand{\M}{{\mathbb M}}
\newcommand{\N}{{\mathbb N}}
\renewcommand{\P}{{\mathbb P}}
\newcommand{\Q}{{\mathbb Q}}
\newcommand{\R}{{\mathbb R}}
\newcommand{\SSS}{{\mathbb S}}
\newcommand{\T}{{\mathbb T}}
\newcommand{\U}{{\mathbb U}}
\newcommand{\V}{{\mathbb V}}
\newcommand{\W}{{\mathbb W}}
\newcommand{\X}{{\mathbb X}}
\newcommand{\Y}{{\mathbb Y}}
\newcommand{\Z}{{\mathbb Z}}
\newcommand{\id}{{\rm id}}
\newcommand{\rank}{{\rm rank}}
\newcommand{\END}{{\mathbb E}{\rm nd}}
\newcommand{\End}{{\rm End}}
\newcommand{\Hom}{{\rm Hom}}
\newcommand{\Hg}{{\rm Hg}}
\newcommand{\tr}{{\rm tr}}
\newcommand{\Sl}{{\rm Sl}}
\newcommand{\Gl}{{\rm Gl}}
\newcommand{\Cor}{{\rm Cor}}
\newcommand{\Aut}{\mathrm{Aut}}
\newcommand{\Sym}{\mathrm{Sym}}
\newcommand{\ModuliCY}{\mathfrak{M}_{CY}}
\newcommand{\HyperCY}{\mathfrak{H}_{CY}}
\newcommand{\ModuliAR}{\mathfrak{M}_{AR}}
\newcommand{\Modulione}{\mathfrak{M}_{1,n+3}}
\newcommand{\Modulin}{\mathfrak{M}_{n,n+3}}
\newcommand{\Gal}{\mathrm{Gal}}
\newcommand{\Spec}{\mathrm{Spec}}
\newcommand{\Jac}{\mathrm{Jac}}

\newcommand{\modulinm}{\mathfrak{M}_{AR}}

\newcommand{\mc}{\mathfrak{M}_{1,6}}

\newcommand{\mx}{\mathfrak{M}_{3,6}}

\newcommand{\tmc}{\widetilde{\mathfrak{M}}_{1,6}}

\newcommand{\tmx}{\widetilde{\mathfrak{M}}_{3,6}}

\newcommand{\smc}{\mathfrak{M}^s_{1,6}}

\newcommand{\smx}{\mathfrak{M}^s_{3,6}}

\newcommand{\tsmc}{\widetilde{\mathfrak{M}}^s_{1,6}}

\newcommand{\tsmx}{\widetilde{\mathfrak{M}}^s_{3,6}}

\newcommand{\proofend}{\hspace*{13cm} $\square$ \\}

\thanks{This work is supported by Chinese Universities Scientific Fund (CUSF), Anhui Initiative in Quantum Information Technologies (AHY150200) and National Natural Science Foundation of China (Grant No. 11622109, No. 11721101).}
\maketitle \centerline{{\itshape To the memory of Yi Zhang}}

\begin{abstract}
We establish a global Torelli theorem for the complete family of Calabi-Yau threefolds arising from cyclic triple covers of $\P^3$ branched along stable hyperplane arrangements.
\end{abstract}


\section{Introduction}
\label{sec:introduction}
Classical Hodge theory attaches complex algebraic varieties with (mixed) Hodge structures and global Torelli theorem asserts that the attached Hodge structures determine complex algebraic varieties up to isomorphism. There are very rare cases of classes of complex algebraic varieties for which global Torelli theorem holds. The renowned examples include the smooth projective curves \cite{Andreotti}, Abelian varieties, polarized K3 surfaces \cite{Piateckii-Shapiro-Safarevi,Burns-Rapoport,Looijenga-Peters} (see also birational global Torelli for hyperk\"{a}hler manifolds \cite{Verbitsky}) and cubic fourfolds \cite{Voisin, Laza, Looijenga}. In this paper, we add one more example into the above list.

Our example stems from our early studies \cite{SXZ, SXZ2} on Calabi-Yau varieties arising from cylic covers of projective spaces branched along hyperplane arrangements.
They are cyclic triple covers of $\P^3$ branched along six hyperplanes which are stable in the sense of GIT \cite{DO} (for hyperplane arrangements in general position (non-unique) crepant resolutions of singular CY varieties exist \cite{SXZ}). We show that the attached weight 3 Hodge structues of these CY varieties are pure (Propositions \ref{prop:summarize Hodge structures on X, C}, \ref{prop:stable CY has pure Hodge structure}). The main result of the paper is the following:
\begin{theorem*}\label{main thm: introduction}
Let $X$ and $Y$ be two Calabi-Yau threefolds which are cyclic triple covers of $\P^3$ branched along stable hyperplane arrangements. Then $X$ and $Y$ are isomorphic if and only if $H^3(X,\Z)$ and $H^3(Y,\Z)$ are isomorphic as polarized Hodge structure.
\end{theorem*}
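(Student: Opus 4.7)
The plan is to exploit the order-$3$ deck automorphism $\sigma:X\to X$ of the triple cover to convert the polarized weight-$3$ Hodge structure on $H^3(X,\Z)$ into a point of a $3$-dimensional complex ball quotient, and then to identify the resulting period map with a classical Deligne--Mostow ball embedding, which is known to be injective on isomorphism classes. Since $H^3(\P^3,\C)=0$, the invariant part of the $\sigma^*$-action on $H^3(X,\C)$ vanishes, giving an eigenspace decomposition $H^3(X,\C)=H^3_\zeta\oplus H^3_{\bar\zeta}$ with $\zeta=e^{2\pi i/3}$ and making $H^3(X,\Z)$ a $\Z[\zeta]$-module of rank~$4$. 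Combined with the Hodge-number computation of Propositions~\ref{prop:summarize Hodge structures on X, C} and~\ref{prop:stable CY has pure Hodge structure}, which forces $H^3_\zeta$ to have Hodge type $(1,3,0,0)$, one can identify $H^3_\zeta$ with $F^2H^3(X,\C)$, and the polarization $Q$ then induces on $H^3_\zeta$ a Hermitian form $h$ of signature $(1,3)$. The associated period domain is the complex ball $\mathbb{B}^3=\{[v]\in\P(H^3_\zeta):h(v,v)>0\}$.

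The first step is essentially automatic: any isomorphism $\phi:(H^3(X,\Z),Q_X)\xrightarrow{\sim}(H^3(Y,\Z),Q_Y)$ of polarized Hodge structures preserves $F^2$ and hence sends $H^3_\zeta(X)$ to $H^3_\zeta(Y)$, so it automatically intertwines the $\sigma^*$-actions and becomes $\Z[\zeta]$-linear. In particular it identifies the Hodge lines $H^{3,0}(X)\subset H^3_\zeta(X)$ and $H^{3,0}(Y)\subset H^3_\zeta(Y)$ as period points in $\mathbb{B}^3$, modulo the arithmetic group $\Gamma$ of $\Z[\zeta]$-linear isometries of the integral Hermitian lattice.

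The second and main step is to identify the resulting period map $\mathcal{P}:\mathfrak{M}^s_{3,6}\to\Gamma\backslash\mathbb{B}^3$ with the Deligne--Mostow period map for six points on $\P^1$ with weights $(\tfrac13,\ldots,\tfrac13)$. By Gale duality (the associated-point construction), the coarse moduli space $\mathfrak{M}^s_{3,6}$ of six stable ordered hyperplanes in $\P^3$ is isomorphic to $\mathfrak{M}^s_{1,6}$, and a Hodge-theoretic comparison identifies $H^3_\zeta(X)$, up to Tate twist, with the $\zeta$-eigenspace of $H^1(C,\C)$ for the cyclic triple cover $C\to\P^1$ branched in the six associated points (a genus-$4$ curve whose $\zeta$-eigenspace has Hodge type $(1,3)$ by Chevalley--Weil). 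Since the Deligne--Mostow period map on $\mathfrak{M}^s_{1,6}$ is an open immersion into $\Gamma\backslash\mathbb{B}^3$, the same conclusion transfers to $\mathcal{P}$, and injectivity of $\mathcal{P}$ on closed points delivers the desired reconstruction: two stable arrangements with the same periods are projectively equivalent up to relabeling, and hence yield isomorphic Calabi--Yau covers.

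The principal obstacle is Step~2: constructing, in a manner that is functorial in the hyperplane arrangement, a $\Z[\zeta]$-linear isomorphism of polarized Hodge structures between the $\zeta$-eigenspace of $H^3(X)$ and that of $H^1(C)$. A direct period computation is unwieldy; the natural route is to build an explicit algebraic correspondence---for instance via an auxiliary abelian variety of Picard type carrying $\Z[\zeta]$-multiplication, or via a symmetric power of $C$ corresponding to $X$---so that the matching of Hodge structures follows by a Lefschetz-type or motivic argument rather than by integration in coordinates. The GIT-stability hypothesis enters to ensure that both period maps remain open immersions on the stable loci, so that no ambiguity arises from boundary strata of the ball quotient.
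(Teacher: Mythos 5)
Your overall strategy coincides with the paper's: decompose $H^3(X,\C)$ into eigenspaces for the covering automorphism, observe that any isomorphism of polarized Hodge structures preserves the Hodge filtration and hence this decomposition, so it is $\Z[\omega]$-linear and gives the same point of $\B_3/\Aut(\Lambda,h)$, and then reduce to the injectivity of the Deligne--Mostow ball-quotient period map for six points on $\P^1$ via the association isomorphism $\mathfrak{M}_{3,6}\simeq\mathfrak{M}_{1,6}$. The step you single out as the principal obstacle is resolved in the paper exactly along one of the routes you name: $X$ is the quotient $C^3/(N\rtimes S_3)$ of the third power of the associated curve, whence $H^3(X,\C)_{(i)}\simeq\wedge^3H^1(C,\C)_{(i)}$ (Proposition \ref{prop:summarize Hodge structures on X, C}); no auxiliary abelian variety is needed. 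For arrangements in general position your outline is therefore complete in substance and matches Theorem \ref{thm:global Torelli}.

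The genuine gap is in the stable case, which is the actual content of the theorem beyond the generic one. When the arrangement $a$ acquires $k\geq 1$ fourfold intersection points, $\dim_\Q H^3(X_a,\Q)=8-2k$ and the eigenlattice has rank $4-k$, so the polarized Hodge structure determines a point not of $\B_3$ but of a sub-ball of dimension $3-k$ cut out by $k$ orthogonal short roots; your sentence that ``no ambiguity arises from boundary strata'' hides the real work. One must (i) prove purity of the mixed Hodge structure on $H^3(X_a,\Q)$ for the singular covers (Proposition \ref{prop:stable CY has pure Hodge structure}, which you cite but do not prove), (ii) show that the extended period isomorphism $\tsmx\xrightarrow{\sim}\B_3$ carries the boundary $\tsmx\setminus\tmx$ onto the union of hyperplane sections $\mathcal{H}=\cup_{r}H_r$, and (iii) identify the restriction of that extension to each boundary stratum with the genuine period map of the degenerate covers into the corresponding sub-ball, so that equality of the rank-$(4-k)$ Hodge structures forces equality of points of $\tsmx$; this is the case analysis $k=1,2$ in the proof of Theorem \ref{thm:global Torelli:stable case}. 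Finally, for $k=3$ the eigenlattice has rank one and the period carries no information whatsoever, so injectivity of any period map cannot give the conclusion there; the paper instead checks directly that all such configurations are projectively equivalent up to permutation. As written, your argument would silently fail in exactly these degenerate cases.
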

The theorem is a direct consequence of Theorems \ref{thm:global Torelli}, \ref{thm:global Torelli:stable case} in the text. The method to establish the results is pretty standard, namely using the variation of Hodge structure attached to a universal family. The analysis of the corresponding period map (and its extension to stable locus) is based on the celebrated work of Deligne-Mostow \cite{DM}.

Inspired by a discussion with Chin-Lung Wang on a possible classification of complete Calabi-Yau families with Yukawa coupling length one, we were led to compare our example with other known families with Yukawa coupling length one. Surprisingly, it turns out our example is essentially \emph{not} new. Indeed, we show in Appendix that our family over the smooth locus formed by hyperplane arrangements in general position is birationally equivalent to the one constructed by J. Rohde \cite{Rohde} via Borcea-Voisin construction (Propositions \ref{prop:Rohde's CY birational to our CY}, \ref{prop:family version:Rohde's CY birational to our CY}). To the authors' best knowledge, our example (which is essentially the one of Rohde) is the \emph{only} example in literature of complete families of CY varieties with the global Torelli property.

\section{Families from hyperplane arrangements}\label{sec:Families from hyperplane arrangements}
Given a hyperplane arrangement $\mathfrak{A}$ in $\P^n$ in general position, the cyclic cover of $\P^n$ branched along $\mathfrak{A}$ is an interesting algebraic variety. When the hyperplane arrangement $\mathfrak{A}$ moves in the coarse moduli space of hyperplane arrangements, we get a family of projective variety. In this section, we collect some known facts about Hodge structures of these cyclic covers.

We say an ordered arrangement $\mathfrak{A}=(H_1,\cdots, H_m)$ of hyperplanes in $\P^n$ is in general position if no $n+1$ of the hyperplanes intersect in a point, or equivalently, if the divisor $\sum_{i=1}^mH_i$ has simple normal crossings.

Given an odd number $n$, we set $r=\frac{n+3}{2}$. Then for each ordered hyperplane arrangement $(H_1,\cdots, H_{n+3})$ in $\P^n$  in general position, we can define a (unique up to isomorphism) degree $r$ cyclic cover of $\P^n$ branched along the divisor $\sum_{i=1}^{n+3}H_i$. In this way, if we denote  the coarse moduli space of   ordered $n+3$ hyperplane arrangements  in $\P^n$ in general position by $\mathfrak{M}_{n, n+3}$, then we obtain a universal family $f_n:\mathcal{X}_{AR}\rightarrow \mathfrak{M}_{n, n+3}$ of degree $r$ cyclic covers of $\P^n$ branched along $n+3$ hyperplane arrangements in general position. In  \cite{SXZ}, we constructed a simultaneous crepant resolution $\pi: \tilde{\mathcal{X}}_{AR}\rightarrow \mathcal{X}_{AR}$ for the family $f$ without changing the middle cohomology of fibers. Moreover, this simultaneous crepant resolution gives an $n$-dimensional projective  Calabi-Yau family which  is maximal in the sense that its Kodaira-Spencer map is an isomorphism at each point of $\mathfrak{M}_{n, n+3}$. We denote this smooth projective Calabi-Yau family by $\tilde{f}_n:\tilde{\mathcal{X}}_{AR}\rightarrow \mathfrak{M}_{n, n+3}$.

Now we recall the relation between a cyclic cover of $\P^1$ branched along points and that of $\P^n$ branched along hyperplane arrangements.

Suppose $(p_1,\cdots, p_{n+3})$ is a collection of $n+3$ distinct points on $\P^1$, and put $H_i=\{p_i\}\times \P^1\times \cdots \times \P^1$. By the natural identification between $\P^n$ and the symmetric power $Sym^n(\P^1)$ of $\P^1$, we can view each $H_i$ as a hyperplane in $\P^n$.  Then it can be shown that $(H_1,\cdots, H_{n+3})$ is a hyperplane arrangement in $\P^n$ in general position (\cite{SXZ}, Lemma 3.4). A direct computation shows that this construction gives an isomorphism between the moduli space $\mathfrak{M}_{1, n+3}$ and $\mathfrak{M}_{n, n+3}$ (\cite{SXZ}, Lemma 3.5). Moreover, for $r=\frac{n+3}{2}$, if we denote $C$ as the $r$-fold cyclic cover of $\P^1$ branched along the $n+3$ points, and $X$ as the $r$-fold cyclic cover of $\P^n$ branched along the corresponding hyperplane arrangement $(H_1,\cdots, H_{n+3})$, then we have an isomorphism
$$
X \simeq C^n/N\rtimes S_n.
$$
Here $N$ is the kernel of the summation homomorphsim $(\Z/r\Z)^n \rightarrow \Z/r\Z$. The action of $\Z/r\Z$  on $C$ is induced from the cyclic cover structure, and $S_n$ acts on $C^n$ by permutating the $n$ factors.

We summarize the properties of the Hodge structures on $X$ and $C$ as the following proposition (\cite{SXZ}, Lemma 2.7, Proposition 3.7):
\begin{proposition}\label{prop:summarize Hodge structures on X, C}
Suppose $n$ is an odd number, and $(p_1,\cdots, p_{n+3})$ is a collection of $n+3$ distinct points on $\P^1$. For each $1\leq i\leq n+3$, put $H_i=\gamma(\{p_i\}\times \P^1\times \cdots \times \P^1)$, viewed as a hyperplane in $\P^n$. Let $r=\frac{n+3}{2}$,  and $C$ be the $r$-fold cyclic cover of $\P^1$ branched along $\sum_{i=1}^{n+3}p_i$. Suppose $X$ is the $r$-fold cyclic cover of $\P^n$ branched along $\sum_{i=1}^{n+3}H_i$. Then we have:
\begin{enumerate}
\item[(1)] The natural $\Q$-mixed Hodge structure on the middle cohomology group $H^n(X, \Q)$ is pure.
\item[(2)] $H^n(X,\C)_{(i)}\simeq \wedge^n H^1(C,\C)_{(i)}$, for each $1\leq i \leq r-1$,
\end{enumerate}
\end{proposition}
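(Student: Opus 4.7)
The plan is to exploit the isomorphism $X \simeq C^n/(N \rtimes S_n)$ recalled above and reduce both statements to cohomological computations on $C^n$ via finite-quotient cohomology combined with the K\"unneth decomposition.

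First I would establish (1). Since $C^n$ is smooth projective and $N \rtimes S_n$ is a finite group of automorphisms, pullback along the quotient map induces a canonical isomorphism
\[
H^n(X, \Q) \xrightarrow{\sim} H^n(C^n, \Q)^{N \rtimes S_n}
\]
of $\Q$-mixed Hodge structures. The right-hand side is a sub-Hodge structure of $H^n(C^n, \Q)$, which is pure of weight $n$ (by K\"unneth and smoothness of $C^n$), so it is itself pure of weight $n$, proving (1).

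For (2), I would pass to $\C$-coefficients and use K\"unneth:
\[
H^n(C^n, \C) = \bigoplus_{k_1 + \cdots + k_n = n} H^{k_1}(C, \C) \otimes \cdots \otimes H^{k_n}(C, \C).
\]
The group $(\Z/r\Z)^n$ acts factor-wise; on each factor $H^0(C, \C)$ and $H^2(C, \C)$ sit in the trivial character, while $H^1(C, \C) = \bigoplus_{i=1}^{r-1} H^1(C, \C)_{(i)}$. Since $N$ is the kernel of the summation $(\Z/r\Z)^n \to \Z/r\Z$, a character of $(\Z/r\Z)^n$ is trivial on $N$ precisely when it factors through summation, that is, when it has the form $(i, i, \ldots, i)$. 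The deck transformation of $X \to \P^n$ corresponds to the quotient $(\Z/r\Z)^n/N \simeq \Z/r\Z$, so $H^n(X, \C)_{(i)}$ is identified with the $(i,\ldots,i)$-isotypic component of $H^n(C^n, \C)^{N \rtimes S_n}$. For $1 \leq i \leq r-1$, only the summand with $k_1 = \cdots = k_n = 1$ contributes, leaving the $S_n$-invariants of $H^1(C, \C)_{(i)}^{\otimes n}$.

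The final step is to identify these $S_n$-invariants with $\wedge^n H^1(C, \C)_{(i)}$. The crucial point, and the main subtlety I would verify carefully, is that the permutation action of $S_n$ on $H^1(C, \C)^{\otimes n}$ coming from the geometric action on $C^n$ is twisted by the Koszul sign, because all tensor factors sit in odd degree. Once this sign twist is taken into account, $S_n$-invariants coincide with alternating tensors, so the $(i,\ldots,i)$-component reduces to $\wedge^n H^1(C, \C)_{(i)}$, yielding (2). I expect the bookkeeping of these Koszul signs together with the character analysis of $(\Z/r\Z)^n$ under $N$ to be the only nontrivial ingredient; everything else is formal.
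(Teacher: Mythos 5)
Your argument is correct and complete: purity in (1) follows from the strict injection of mixed Hodge structures $H^n(X,\Q)\hookrightarrow H^n(C^n,\Q)$ onto the $N\rtimes S_n$-invariants, and in (2) your character analysis of $(\Z/r\Z)^n$ relative to $N$ together with the Koszul sign in the K\"unneth $S_n$-action correctly isolates $\wedge^n H^1(C,\C)_{(i)}$ as the $(i,\dots,i)$-isotypic piece for $1\le i\le r-1$. The paper itself gives no proof, citing \cite{SXZ} (Lemma 2.7, Proposition 3.7), and your route via the quotient description $X\simeq C^n/(N\rtimes S_n)$ is exactly the argument that reference and the surrounding discussion in Section 2 intend.
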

Here and from now on, we fix a primitive $r$-th root of unity $\zeta$, a generator $\sigma$ of the cyclic group $\Z/r\Z$, and we use $H^n(X,\C)_{(i)}$ to denote the $i$-eigenspace $\{\alpha \in H^n(X, \C)| \sigma \alpha=\zeta^i \alpha\}$ of $H^n(X,\C)$. The notation $H^1(C,\C)_{(i)}$ has the similar meaning.

\begin{remark}\label{remark:f_n and tildef_n}
Since the  simultaneous crepant resolution $\tilde{\mathcal{X}}_{AR}\rightarrow \mathcal{X}_{AR}$ of the universal family $\mathcal{X}_{AR}\xrightarrow{f} \mathfrak{M}_{n,n+3}$ does not change the middle cohomologies of the fibers, the two $\Q$-PVHS (rational polarized variation of  Hodge structures) $R^n\tilde{f}_*\C_{\mathcal{X}_{AR}}$ and $R^n f_*\C_{\mathcal{X}_{AR}}$ are isomorphic.
\end{remark}

\section{The monodromy group and period map: curve case}\label{sec:monodromy group and period map:curve case}
In this section, we first determine 
the monodromy group of the universal family of cyclic triple covers of $\P^1$ branched along six distinct points. Then we recall Deligne-Mostow' result about period maps of this family.

Take five distinct points $a_1, \cdots, a_5\in \C$. Let $C$ be the smooth projective curve whose affine model is defined by the equation  \begin{displaymath}
\{(x,y)\in \C^2| y^3=\prod_{i=1}^5 (x-a_i)\}.
\end{displaymath}
 The cyclic triple covering  structure induces   a natural automorphism of  $C$:
\begin{equation}\notag
\begin{split}
\sigma: C &\rightarrow C \\
(x,y)&\mapsto (x, \omega y)
\end{split}
\end{equation}
where $\omega=\exp(\frac{2\pi\sqrt{-1}}{3})$ is a primitive cubic root of unity.


We have the decomposition of $\Z[\omega]$-modules:
$$
H^1(C, \ \Z)\otimes_{\Z}\Z[\omega]=H^1(C, \ \Z[\omega])_{\omega}\oplus H^1(C, \ \Z[\omega])_{\bar{\omega}}
$$
where
$$
H^1(C, \ \Z[\omega])_{\omega^i}:=\{\alpha\in H^1(C, \ \Z[\omega])| \sigma^*\alpha = \omega^i \alpha\};\ \  i=1,2.
$$

The formula    $h(\alpha, \beta):=- i  Q(\alpha, \bar{\beta})$ defines an  Hermitian form
$$
h: H^1(C, \ \Z[\omega])_{\omega} \times H^1(C, \ \Z[\omega])_{\omega} \rightarrow \Z[\omega],
$$
where $Q$ means the intersection pairing  on  $H^1(C, \ \Z)$. We can verify that $H^1(C, \ \Z[\omega])_{\omega}$ is a rank four free $\Z[\omega]$-module, and the Hermitian form $h$ is unimodular with signature $(3,1)$.

Now we can describe the monodromy group in the curve case. Let $(\Lambda, h)$ be a fixed $\Z[\omega]$-lattice of signature $(3,1)$, and let $\mc:=\{(z_1,\cdots, z_6)\in (\P^1)^6| z_i\neq z_i, \ \forall i\neq j\}/PGL(2, \ \C)$  be the moduli space of ordered six distinct points on $\P^1$. Let $f: \mathcal{C}\rightarrow \mc$ be the universal family  of  cyclic triple covers of $\P^1$ branched along six distinct points. Fix a base point $s\in \mc$, and let $C:=f^{-1}(s)$ be the fiber over $s$. Then we have the monodromy representation  $\rho: \pi_1(\mc, s)\rightarrow Aut(H^1(C, \ \Z[\omega])_{\omega}, h)$. Since $(H^1(C, \ \Z[\omega])_{\omega}, h)\simeq (\Lambda, h)$, we can view the monodromy group $\Gamma:=\rho(\pi_1(\mc, s))$ as a subgroup of $Aut(\Lambda, h)$.

In order to describe $\Gamma$, we first introduce some notations. Let $\theta:=\omega-\bar{\omega}=\sqrt{3}i$. Then $V:= \Lambda/\theta \Lambda$ is a four dimensional vector space over the finite field $\F_3\simeq \Z[\omega]/\theta \Z[\omega]$, and $h$ reduces to a quadratic form $q$ on $V$. Let $\psi: Aut(\Lambda, h)\rightarrow Aut(V, q)$ be the natural reduction map, and let $v: Aut(V, q)\rightarrow \F_3^*/\F_3^{*2}\simeq \Z/ 2\Z$ be the spinor norm. Define the following notations:
\begin{equation}\notag
\begin{split}
&Aut^{+}(V,q):=ker v \\
&Aut^{+}(\Lambda, h):= ker (v\circ \psi)\\
&\Gamma_{\theta}:=ker \psi\\
&S:=\{\pm 1, \pm \omega, \pm \omega^2\}\subset Aut(\Lambda, h)\\
&S_0:=\{1, \omega, \omega^2\}\subset S\\
&PAut(\Lambda, h):= Aut(\Lambda, h)/S\\
&P\Gamma_{\theta}:=\Gamma_{\theta}/S_0
\end{split}
\end{equation}

\begin{proposition}\label{prop:monodromy of mc}
$\Gamma$ is a subgroup of $\Gamma_{\theta}$, and $\Gamma_{\theta}$ is generated by $\Gamma$ and $\omega$.
\end{proposition}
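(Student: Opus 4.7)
The plan is to establish the two halves of the proposition separately. The inclusion $\Gamma \subseteq \Gamma_\theta$ will follow from a direct Picard--Lefschetz computation on a convenient set of generators of $\pi_1(\mc,s)$, while the reverse inclusion $\Gamma_\theta \subseteq \langle \Gamma, \omega \rangle$ will be extracted from Deligne--Mostow's description of the monodromy of the hypergeometric family with equal weights $(1/3,\dots,1/3)$.

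For the easy direction, I would use that $\pi_1(\mc, s)$ is generated by half-twist loops $\gamma_{ij}$ that swap two branch points $z_i, z_j$ while fixing the others. The standard Picard--Lefschetz analysis for cyclic branched covers shows that each such $\gamma_{ij}$ acts on $H^1(C_s, \Z[\omega])_\omega$ as an order-three complex reflection
\[
r_\delta(v) = v + (\omega - 1)\,\frac{h(v,\delta)}{h(\delta,\delta)}\,\delta,
\]
for a suitable vanishing class $\delta$. The key arithmetic fact is the identity $1-\omega = \omega^{2}\theta$ in $\Z[\omega]$, which shows $\omega - 1 \equiv 0 \pmod{\theta}$ and hence $r_\delta \equiv \mathrm{id}$ on $V = \Lambda/\theta\Lambda$. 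Thus every generator of $\Gamma$ lies in $\Gamma_\theta$, giving $\Gamma \subseteq \Gamma_\theta$. Since moreover $\omega \equiv 1 \pmod{\theta}$, the scalar $\omega$ itself lies in $\Gamma_\theta$, so $\langle \Gamma, \omega \rangle \subseteq \Gamma_\theta$.

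For the reverse inclusion I would appeal to Deligne--Mostow's theorem in \cite{DM}. Specialized to the weights $\mu = (1/3,\dots,1/3)$ on six points on $\P^1$, they realize the period map $\mc \to \B^3$ attached to the eigenspace $H^1(C,\C)_\omega$ as an embedding whose image, after passing to the quotient by the projective unitary group, fills up the ball quotient $\B^3 / P\Gamma_\theta$. Projectively this identifies $P\Gamma$ with $P\Gamma_\theta$, i.e.\ $\Gamma \cdot S_0 = \Gamma_\theta$, and the observation $S_0 = \langle \omega \rangle$ then finishes the argument.

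The main obstacle is precisely the identification of $P\Gamma$ with the full congruence subgroup $P\Gamma_\theta$, as opposed to some proper finite-index subgroup. Establishing this requires knowing that the tri-reflections $r_\delta$ produced by all the collision walls in $\mc$ exhaust a generating set for $P\Gamma_\theta$; this is essentially a volume/arithmetic statement internal to Deligne--Mostow's ball-quotient theory, and the bookkeeping required to match conventions (signature, choice of eigenspace, and the precise congruence level produced) between our lattice $(\Lambda,h)$ and theirs will be the most delicate step in the write-up.
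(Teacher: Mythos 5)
There is a genuine gap in the second half of your argument, and a misstatement in the first half worth fixing. Since $\mc$ is the moduli space of \emph{ordered} six points, its fundamental group is not generated by half-twists $\gamma_{ij}$ that swap two branch points: those are not loops in the ordered configuration space. The generators are the \emph{squares} of half-twists, i.e.\ full twists. This matters for your congruence computation, because a half-twist acts on $\Lambda$ as a $(-\omega)$-reflection, $v\mapsto v+(-\omega-1)\,h(v,\delta)\,\delta=v+\omega^{2}h(v,\delta)\,\delta$, and $\omega^{2}$ is a unit, so this is \emph{not} congruent to the identity mod $\theta$ (consistently, the unordered family has monodromy $Aut^{+}(\Lambda,h)\supsetneq\Gamma_{\theta}$). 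Only its square, the $\omega$-reflection you wrote down, reduces to the identity on $V=\Lambda/\theta\Lambda$. With the generators correctly identified as full twists, your computation does yield $\langle\Gamma,\omega\rangle\subseteq\Gamma_{\theta}$.

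The serious problem is the reverse inclusion. You reduce it to the claim that Deligne--Mostow identify the projectivized monodromy group with the full congruence subgroup $P\Gamma_{\theta}$ rather than some proper finite-index subgroup, and you yourself flag that this identification is the delicate step. But that identification \emph{is} the content of the proposition: \cite{DM} prove that the monodromy group is a lattice and that the period map is an isomorphism onto the ball quotient by that group, but they do not compute the group as a congruence subgroup of $Aut(\Lambda,h)$, so the appeal is circular as written. The paper closes this gap by group theory rather than by Deligne--Mostow: it passes to the ordered configuration space $M^{'}\subset\C^{6}$ and its $S_{6}$-quotient, notes that the braid generators $\tau_{i}$ act as $(-\omega)$-reflections so that Lemma (7.12) of \cite{ACT} gives monodromy $Aut^{+}(\Lambda,h)$ for the unordered family, and then sandwiches the pure-braid monodromy: it lies in $\Gamma_{\theta}$ (your congruence computation) and has index at most $[B_{6}:PB_{6}]=720$ in $Aut^{+}(\Lambda,h)$, which by the exact sequence $1\to\Gamma_{\theta}\to Aut^{+}(\Lambda,h)\to Aut^{+}(V,q)\to 1$ of \cite{ACT} and $|Aut^{+}(V,q)|=720$ forces it to equal $\Gamma_{\theta}$. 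The remaining discrepancy between the configuration space of points in $\C$ and the $PGL(2)$-quotient $\mc$ is handled by the product decomposition $M^{''}\simeq\mc\times\C\times\C^{*}$, whose extra loop acts exactly by the scalar $\omega$; this is where the additional generator $\omega$ comes from. Some argument of this type (an explicit generating set for $\Gamma_{\theta}$, or an index count) is unavoidable, and your proposal does not supply it.
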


As a direct consequence, we have the following corollary.
\begin{corollary}\label{cor:projectified monodromy group}
The projectified monodromy representation
$$
P\rho: \pi_1(\mc, s)\rightarrow PAut(\Lambda, h)
$$
has image $P\Gamma_{\theta}$.
\end{corollary}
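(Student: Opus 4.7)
The plan is to deduce the corollary as a quick quotient argument from Proposition \ref{prop:monodromy of mc}. By construction the projectified representation $P\rho$ factors as $\pi\circ\rho$, where $\pi\colon Aut(\Lambda,h)\to PAut(\Lambda,h)=Aut(\Lambda,h)/S$ is the quotient by $S=\{\pm 1,\pm\omega,\pm\omega^2\}$; so the image of $P\rho$ is simply $\pi(\Gamma)=\Gamma S/S$. The task thus reduces to identifying this image with $P\Gamma_\theta=\Gamma_\theta/S_0$.

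First I would pin down the intersection $S\cap\Gamma_\theta$ inside $Aut(\Lambda,h)$. Using the relation $\theta=\omega-\bar\omega=2\omega+1$ and $N(\theta)=3$, one has $\Z[\omega]/\theta\Z[\omega]\cong\F_3$, in which $\omega\equiv -1/2\equiv 1$; hence multiplication by $\omega$ or $\omega^2$ acts trivially on $V=\Lambda/\theta\Lambda$, so $S_0\subset\Gamma_\theta$. Conversely $-2\not\equiv 0\pmod\theta$, so $-\id\notin\Gamma_\theta$, and therefore $S\cap\Gamma_\theta=S_0$. Since $S_0$ consists of scalars it is central in $Aut(\Lambda,h)$, hence normal in $\Gamma_\theta$, and the induced map $P\Gamma_\theta=\Gamma_\theta/S_0\to PAut(\Lambda,h)$ is injective, identifying $P\Gamma_\theta$ with the subgroup $\pi(\Gamma_\theta)$.

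Finally, Proposition \ref{prop:monodromy of mc} gives $\Gamma\subset\Gamma_\theta$ and $\Gamma_\theta=\langle\Gamma,\omega\rangle$. Since $\omega\in S\subset\ker\pi$, this yields $\pi(\Gamma_\theta)=\pi(\Gamma)$; combining with the previous paragraph,
$\mathrm{Im}(P\rho)=\pi(\Gamma)=\pi(\Gamma_\theta)=P\Gamma_\theta$, as claimed.

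There is no genuine obstacle: the argument is formal bookkeeping around the quotient by $S$, and the only small verification is the modular arithmetic that guarantees $S\cap\Gamma_\theta=S_0$, so that $\omega$ really does become trivial in the projectification while $-\id$ does not. If this numerical check failed one would have to replace $P\Gamma_\theta$ by $\Gamma_\theta/(\Gamma_\theta\cap S)$, but the computation above shows the two groups agree.
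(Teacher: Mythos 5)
Your argument is correct and is exactly the deduction the paper intends when it calls the corollary ``a direct consequence'' of Proposition \ref{prop:monodromy of mc}: projectify, note $\omega\in S=\ker\pi$ so $\pi(\Gamma_\theta)=\pi(\Gamma)$, and check $S\cap\Gamma_\theta=S_0$ so that $\Gamma_\theta/S_0$ embeds in $PAut(\Lambda,h)$ with image $\pi(\Gamma_\theta)$. The modular computation $\omega\equiv 1$, $-1\not\equiv 1 \pmod{\theta}$ is the one detail the paper leaves implicit, and you have verified it correctly.
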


Before starting the proof of Proposition \ref{prop:monodromy of mc}, we first introduce some auxiliary spaces and families of curves over them. Define  $M^{'}:=\{(z_1,\cdots, z_6)\in \C^6|\  \forall i\neq j, \ \ z_i\neq z_j.\}$. Obviously the permutation group $S_6$ acts on $M^{'}$. Let $\bar{M^{'}}:=M^{'}/S_6$ be the quotient space. Similarly as the universal family $\mathcal{C}\xrightarrow{f} \mc$, we have a universal family $\mathcal{C}^{'}\rightarrow M^{'}$, such that for each $s=(z_1,\cdots, z_6)\in M^{'}$, the fiber $\mathcal{C}^{'}_s$ is the cyclic triple cover of $\P^1$ branched along the six points $z_1,\cdots, z_6$.  Obviously this family $\mathcal{C}^{'}\rightarrow M^{'}$ descends to a family $\bar{\mathcal{C}}^{'}\rightarrow \bar{M^{'}}$ over $\bar{M^{'}}$. Fixing base points $s\in M^{'}$ and $\bar{s}\in \bar{M^{'}}$, we also have the monodromy representations $\rho:\pi_1(M^{'}, s)\rightarrow Aut(\Lambda, h)$ and $\rho:\pi_1(\bar{M^{'}}, \bar{s})\rightarrow Aut(\Lambda, h)$.


\begin{lemma}\label{lemma:monodromy of barM and M}The monodromy groups can be determined as follows:
\begin{itemize}
\item[(1)]The monodromy group of the family $\bar{\mathcal{C}}^{'}\rightarrow \bar{M^{'}}$ over $\bar{M^{'}}$ is $\rho(\pi_1(\bar{M^{'}}, \bar{s}))=Aut^{+}(\Lambda, h)$.
\item[(2)] The monodromy group of the family $\mathcal{C}^{'}\rightarrow M^{'}$ over $M^{'}$ is $\rho(\pi_1(M^{'}, s))=\Gamma_{\theta}$.
\end{itemize}
\end{lemma}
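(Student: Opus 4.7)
The plan is to identify the fundamental groups as braid groups, compute the monodromy of the standard generators via equivariant Picard-Lefschetz, and match the resulting image with the arithmetic groups $\Aut^+(\Lambda, h)$ and $\Gamma_\theta$ through a combination of direct reduction-modulo-$\theta$ computations and a classical generation result for Hermitian $\Z[\omega]$-lattices.

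Under the standard identification, $\pi_1(\bar{M^{'}}, \bar{s})$ is the Artin braid group $B_6$, generated by five half-twists $\beta_1, \dots, \beta_5$, and $\pi_1(M^{'}, s)$ is the pure braid group $P_6$, generated by the conjugates of the squares $\beta_i^2$. At a collision of two adjacent branch points $z_i, z_{i+1}$, the cyclic triple cover acquires an $A_2$ singularity whose Milnor fibre contributes, in the $\omega$-eigenspace $\Lambda = H^1(C, \Z[\omega])_\omega$, a single vanishing class $e_i$. Equivariant Picard-Lefschetz then identifies $\rho(\beta_i)$ with the triflection
\[
R_i : v \longmapsto v - (1-\omega)\,\frac{h(v,e_i)}{h(e_i,e_i)}\,e_i,
\]
acting as multiplication by $\omega$ on the line $\Z[\omega]\cdot e_i$ and as the identity on its Hermitian complement, with $h(e_i,e_i)$ pinned down by the local ramification data; the braid relations among the $\beta_i$ translate faithfully into the braid relations among the $R_i$.

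For the containments, a direct spinor-norm computation on the reduction $\bar R_i\in\Aut(V,q)$ (coming from the Hermitian norm class of $\bar e_i$ in $\F_3^*/\F_3^{*2}$) places each $R_i$ in $\Aut^+(\Lambda,h)$, yielding $\rho(\pi_1(\bar{M^{'}},\bar{s}))\subseteq\Aut^+(\Lambda,h)$. For pure braids, a separate computation shows that $R_i^2$ reduces to the identity modulo $\theta$ (since $1-\omega^2$ is divisible by $\theta$), so $R_i^2\in\Gamma_\theta$; because $\Gamma_\theta$ is a normal subgroup of $\Aut(\Lambda,h)$, every conjugate of $R_i^2$ by an element of $B_6$ also lies in $\Gamma_\theta$, proving $\rho(P_6)\subseteq\Gamma_\theta$.

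For surjectivity, the key input is the classical generation result for reflection groups of Hermitian $\Z[\omega]$-lattices: the unimodular lattice $(\Lambda, h)$ of signature $(3,1)$ has its arithmetic group $\Aut^+(\Lambda,h)$ generated by triflections along vanishing vectors of trivial spinor norm, in the spirit of Deligne-Mostow \cite{DM} and of Allcock's work on complex hyperbolic reflection groups. The $R_i$'s together with their $B_6$-conjugates exhaust this generating set, giving $\rho(\pi_1(\bar{M^{'}},\bar{s})) = \Aut^+(\Lambda,h)$. Part (2) then follows by an index comparison: $[\rho(B_6):\rho(P_6)]$ divides $[B_6:P_6] = |S_6|$, while $[\Aut^+(\Lambda,h):\Gamma_\theta] = |\psi(\Aut^+(\Lambda,h))|$ equals $|S_6|$ by an independent count over $\F_3$, forcing equality. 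The main obstacle is precisely this finite index calculation over $(V,q)$, which amounts to identifying the image $\psi(\Aut^+(\Lambda,h))$ and its spinor-norm-kernel structure; this is explicit but combinatorially delicate, and it is where the Deligne-Mostow analysis of the associated ball quotient enters in an essential way.
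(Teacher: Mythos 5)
Your overall strategy coincides with the paper's: identify $\pi_1(\bar{M^{'}},\bar{s})$ with $B_6$ and $\pi_1(M^{'},s)$ with the pure braid group, compute the monodromy of the standard half-twists as complex reflections on $\Lambda$, invoke an Allcock--Carlson--Toledo-type generation result to get $\rho(\pi_1(\bar{M^{'}},\bar{s}))=Aut^{+}(\Lambda,h)$, and then squeeze $\rho(PB_6)$ between $\Gamma_{\theta}$ and a subgroup of index at most $720$ to conclude part (2). The concluding index sandwich is exactly the paper's argument, which uses Lemma (4.5) of \cite{ACT} for $[Aut^{+}(\Lambda,h):\Gamma_{\theta}]=|Aut^{+}(V,q)|=720$ and Lemma (7.12) of \cite{ACT} for the surjectivity in part (1).

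There is, however, a genuine error at the crucial step. You assert that the half-twist $\beta_i$ acts on $\Lambda$ as a \emph{triflection}, i.e.\ an order-$3$ reflection with multiplier $\omega$, namely $R_i(v)=v-(1-\omega)h(v,e_i)h(e_i,e_i)^{-1}e_i$. The correct local computation (the Milnor monodromy of the cusp $y^3=x^2$ has primitive sixth roots of unity as eigenvalues) shows that the half-twist acts as a $-\omega$-reflection, of order $6$; it is the \emph{full} twist $\beta_i^2$ that is the order-$3$ reflection, with multiplier $(-\omega)^2$. This is not cosmetic: since $1-\omega=\omega^2\theta$ is divisible by $\theta$, an $\omega$-reflection reduces to the identity modulo $\theta$, so with your formula every $R_i$ would already lie in $\Gamma_{\theta}$, whence $\rho(B_6)\subseteq\Gamma_{\theta}$ --- a subgroup of index $720$ in $Aut^{+}(\Lambda,h)$, flatly contradicting the surjectivity you claim in part (1). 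The entire mechanism of the lemma is that the half-twists have nontrivial reduction mod $\theta$ (they become order-$2$ reflections generating $Aut^{+}(V,q)\simeq S_6$, since $-\omega\equiv -1$ mod $\theta$) while their squares reduce to the identity; conflating the half-twist with the full twist collapses this distinction. With the multiplier corrected to $-\omega$, your containment and index arguments do go through and reproduce the paper's proof. A secondary weakness is that your surjectivity step rests on an unspecified ``classical generation result for triflections'': besides needing order-$6$ reflections rather than triflections for consistency, this is precisely the content of Lemma (7.12) of \cite{ACT} and should be cited as such rather than gestured at.
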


\begin{proof}
(1) It is well known that the fundamental group $\pi_1(\bar{M^{'}}, \bar{s})$ is isomorphic to the braid group $B_6$. In order to describe it, we suppose the base point $\bar{s}$ represents the unordered subset $B=\{1,2,\cdots, 6\}$ of $\C$. It is a standard fact that $B_6$ is isomorphic to $Mod_c(\C, B)$, the compactly-supported mapping-class group of the pair $(\C, B)$. Moreover, $B_6$ admits standard generators $\tau_1,\cdots, \tau_5$, where $\tau_i$ denotes the right Dehn half-twist around  a loop enclosing  the interval $[i,i+1]$ in $\C$. These five generators satisfy the braid relation
$$
\tau_i\tau_{i+1}\tau_i=\tau_{i+1}\tau_i\tau_{i+1}
$$
for $i=1,\cdots, 5$, as well as the commutation relation $\tau_i\tau_{j}=\tau_j\tau_i$ for $|i-j|>1$; and these generators and relations give a presentation for $B_6$.

As for the monodromy action of $B_6$ on $\Lambda$, we can see $\tau_i$ acts on $\Lambda$ as a $-\omega$-reflection. Then an application of Lemma (7.12) in \cite{ACT} shows that $\rho(\pi_1(\bar{M^{'}}, \bar{s}))=Aut^{+}(\Lambda, h)$.

(2) We know that $\pi_1(M^{'}, s)\simeq PB_6$, the pure braid group of six points, and $PB_6$ is generated by $\tau_1^2, \tau_2^2,\cdots, \tau_5^2$. Moreover, $PB_6$ is normal subgroup of $B_6$, and the quotient group $B_6/PB_6$ is isomorphic to $S_6$, the permutation group of six elements.

By  the arguments in (1), we see $\rho(\tau_i)$ is a $-\omega$-reflection in $\Lambda$, for $i=1,2,\cdots, 5$. So $\rho(\tau_i^2)$ is a $\omega$-reflection in $\Lambda$, for $i=1,2,\cdots, 5$, and hence by the definition of $\Gamma_{\theta}$, the monodromy group $\rho(\pi_1(M^{'}, s))$ is contained in $\Gamma_{\theta}$.  By Lemma (4.5) in \cite{ACT}, we have a short exact sequence of group $1\rightarrow \Gamma_{\theta}\rightarrow Aut^{+}(\Lambda, h)\rightarrow Aut^{+}(V, q)\rightarrow 1 $. Then we get
$$
[Aut^{+}(\Lambda, h): \rho(\pi_1(M^{'}, s))]\geq [Aut^{+}(\Lambda, h): \Gamma_{\theta}]=|Aut^{+}(V, q)|=720.
$$

On the other hand,
$$
[Aut^{+}(\Lambda, h): \rho(\pi_1(M^{'}, s))]=[\rho(\pi_1(\bar{M^{'}}, \bar{s})):\rho(\pi_1(M^{'}, s))]\leq [B_6:PB_6]=|S_6|=720.
$$

So we obtain $\rho(\pi_1(M^{'}, s))=\Gamma_{\theta}$.

\end{proof}

\begin{remark}
As a byproduct of  the proof of Lemma \ref{lemma:monodromy of barM and M}, we get the commutative diagram:
\begin{equation}\label{diagram:some groups}
\begin{diagram}[labelstyle=\scriptscriptstyle]
1       & \rTo &PB_6 &\rTo& B_6 & \rTo & S_6         & \rTo &1 \\
        &      & \dTo&    &\dTo &      & \dTo^{\wr} &      &   \\
 1       & \rTo &\Gamma_{\theta} &\rTo& Aut^{+}(\Lambda, h) & \rTo & Aut^{+}(V, q)        & \rTo &1 \\
\end{diagram}
\end{equation}
where the rows are short exact sequences and the homomorphism $S_6\rightarrow Aut^{+}(V, q)$ is an isomorphism.

\end{remark}

\textbf{Proof of Proposition \ref{prop:monodromy of mc}:}

Let $M^{''}:=\{(z_1,\cdots, z_5)\in \C^5| z_i\neq z_j, \ \ \forall i\neq j\}$ be the moduli space of five distinct ordered points in $\C$. Let $\mathcal{C}^{''}\rightarrow M^{''}$ be the family of smooth projective curves whose affine model is defined by $y^3=\prod_{i=1}^5(x-z_i)$.

We have the following inclusion of  moduli spaces:
\begin{equation}\notag
\begin{diagram}[labelstyle=\scriptscriptstyle]
\mc &\rInto^i &M^{''}
\end{diagram}
\end{equation}
Here we identify $\mc$ as the space $\{(z_1, z_2, z_3)\in (\C\backslash\{0,1\})^3| z_i\neq z_j,  \ \forall i\neq j\}$, and $i$ maps a point $(z_1, z_2, z_3)$ to the point $(0,1, z_1, z_2, z_3)$ in $M^{''}$.

It is easy to see that, through the map $i$, the space $M^{''}$ is homeomorphism to the product space $\mc\times \C \times \C^*$.
Moreover, the restriction $\mathcal{C}^{''}|_{\mc}$ is isomorphic to the family $\mathcal{C}$ over  $\mc$. So the monodromy group of $\mathcal{C}^{''}|_{\mc}\rightarrow \mc$ is also $\Gamma$. On the other hand, it can be seen directly from the construction that, the monodromy groups of the two families $\mathcal{C}^{''}\rightarrow M^{''}$ and $\mathcal{C}^{'}\rightarrow M^{'}$ are isomorphic. We then identify these two monodromy groups, and by Lemma \ref{lemma:monodromy of barM and M} (2), we know the monodromy group of $\mathcal{C}^{''}\rightarrow M^{''}$ is $\Gamma_{\theta}$.

Now we compare the monodromy groups of $\mathcal{C}^{''}\rightarrow M^{''}$ and its restriction $\mathcal{C}^{''}|_{\mc}\rightarrow \mc$.
Since $M^{''}\simeq \mc\times \C \times \C^*$, by fixing a base point $s=(z_1,\cdots, z_5)\in \mc$, the fundamental group $\pi_1(M^{''}, s)$ is generated by $\pi_1(\mc, s)$ and a loop $\mu: \theta \mapsto (e^{i\theta}z_1, \cdots, e^{i\theta}z_5)$, $0\leq \theta\leq 2\pi$. It can be verified directly that the monodromy action induced by $\mu$ is $\rho(\mu)=\omega\in \Gamma_{\theta}$. So we obtain that the monodromy group of the family $\mathcal{C}^{''}\rightarrow M^{''}$ is generated by $\omega$ and the monodromy group of $\mathcal{C}^{''}|_{\mc}\rightarrow \mc$. This in turn implies that   $\Gamma_{\theta}$ is generated by $\Gamma$ and $\omega$.  \proofend

Now we describe the period map of the family $f: \mathcal{C}\rightarrow \mc$. Recall $(\Lambda, h)$ is a $\Z[\omega]$-lattice of signature $(3,1)$. Let $\B_3:=\{v\in \P(\Lambda\otimes_{\Z[\omega]}\C) | h(v, v)<0)\}$, which is isomorphic to  the three dimensional unit ball $\B_3$. For  any point $s\in \mc$, the space $H^{1,0}(\mathcal{C}_s, \C)_{\bar{\omega}}:=\{\alpha\in H^1(C, \ \C)| \sigma^*\alpha = \bar{\omega} \alpha\}$  is a one-dimensional linear space over $\C$. By associating to $s$ the line in $H^1(\mathcal{C}_s, \C)_{\bar{\omega}}$ generated by $H^{1,0}(\mathcal{C}_s, \C)_{\bar{\omega}}$, we get  a well defined holomorphic map (called the period map):
$$
P_C: \mc^{uni}\rightarrow \B_3
$$
where $\mc^{uni}$ is the universal cover of $\mc$.

By definition,  $P_C$ is equivariant under the projectified monodromy representation:
$$
P\rho: \pi_1(\mc, s)\rightarrow    PAut(\Lambda, h).
$$
Let $K$ be the kernel of $P\rho$, then $P_C$ descends to the following  holomorphic map, still denoted by $P_C$
$$
\tmc:=\mc^{uni}/K\rightarrow \B_3.
$$

Let $\smc:=\{f:\{1,2,\cdots, 6\}\rightarrow (\P^1)^6| \forall a\in \P^1, \sharp f^{-1}(a,a,\cdots, a)\leq 2\}/PGL(2)$ be the moduli space of stable six ordered points on $\P^1$. Then$\smc$ is a smooth complex manifold and $\smc \backslash \mc$ is a normal crossing divisor. If we denote  $\tsmc\rightarrow \smc$ as the Fox completion of $\tmc\rightarrow \mc$, then By \cite{DM}, the period map $P_C:\tmc\rightarrow \B_3$ extends to an isomorphism $\tsmc\xrightarrow{\sim} B_3$. The following proposition is the starting point of our global Torelli theorem.
\begin{proposition}\label{prop:period iso: curve case}
The period mapping induces a bijective map: $\smc/S_6 \xrightarrow{\sim} \B_3/Aut(\Lambda, h)$.
\end{proposition}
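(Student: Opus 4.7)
My plan is to combine the Deligne--Mostow isomorphism $P_C\colon \tsmc \xrightarrow{\sim} \B_3$ quoted just above the statement with the monodromy computation of Corollary~\ref{cor:projectified monodromy group}, and then pass to the $S_6$--quotient on both sides.

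First, by construction $\tmc \to \mc$ is the Galois cover associated to the projectified monodromy representation, so by Corollary~\ref{cor:projectified monodromy group} its deck group is precisely $P\Gamma_\theta$. The Fox completion preserves the deck group, hence $\tsmc \to \smc$ is again $P\Gamma_\theta$--Galois, and Deligne--Mostow yields a canonical bijection $\smc \xrightarrow{\sim} \B_3/P\Gamma_\theta$. The natural $S_6$--action on $\smc$ (permuting the six marked points) then lifts to $\tsmc \cong \B_3$, uniquely up to $P\Gamma_\theta$. Let $G \subset \Aut(\B_3)$ denote the subgroup generated by $P\Gamma_\theta$ together with all such lifts. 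Then $G$ normalizes $P\Gamma_\theta$ with $G/P\Gamma_\theta \cong S_6$, and the previous bijection descends to $\smc/S_6 \xrightarrow{\sim} \B_3/G$.

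The remaining step is to identify $G$ with $PAut(\Lambda,h)$. Each lift of an element of $S_6$ is induced by the geometric permutation of the six branch points, which acts on the fiber cohomology by $\Z[\omega]$--lattice automorphisms preserving $h$; hence $G \subseteq PAut(\Lambda,h)$. For the reverse inclusion I compare orders. Extending the middle row of diagram~\eqref{diagram:some groups} from the ``$+$''--subgroups to the full groups, the reduction $\psi\colon \Aut(\Lambda,h)\to \Aut(V,q)$ is surjective with kernel $\Gamma_\theta$, and $[\Aut(V,q):\Aut^+(V,q)]=2$, so $[\Aut(\Lambda,h):\Gamma_\theta]=1440$. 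Dividing out by the scalar subgroups $S$ (order $6$) and $S_0=S\cap \Gamma_\theta$ (order $3$) yields
\[[PAut(\Lambda,h):P\Gamma_\theta] \;=\; \frac{|\Aut(\Lambda,h)|/|S|}{|\Gamma_\theta|/|S_0|} \;=\; \frac{1440\cdot 3}{6} \;=\; 720 \;=\; |S_6|.\]
Therefore $|G|=|PAut(\Lambda,h)|$ and $G=PAut(\Lambda,h)$. Since $\Aut(\Lambda,h)$ acts on $\B_3\subset \P(\Lambda\otimes_{\Z[\omega]}\C)$ through its projectivization, we conclude $\smc/S_6 \xrightarrow{\sim} \B_3/\Aut(\Lambda,h)$.

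The main obstacle is precisely this last identification: a priori the $S_6$--lifts might generate only the smaller subgroup $PAut^+(\Lambda,h)$, and it is the order count keyed on the \emph{full} $\Aut(\Lambda,h)$ (together with the correct scalar quotients via $S$ and $S_0$) that rules this out and matches the index exactly $|S_6|=720$.
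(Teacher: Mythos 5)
Your argument is correct and follows essentially the same route as the paper: Deligne--Mostow gives $\smc\simeq\tsmc/P\Gamma_{\theta}\simeq\B_3/P\Gamma_{\theta}$, and the passage to $\smc/S_6\simeq\B_3/Aut(\Lambda,h)$ rests on the index count $[PAut(\Lambda,h):P\Gamma_{\theta}]=720=|S_6|$, which the paper reads off from diagram~(\ref{diagram:some groups}) as $PAut(\Lambda,h)/P\Gamma_{\theta}\simeq Aut^{+}(V,q)\simeq S_6$. Your extra step of checking that the geometric $S_6$-lifts generate all of $PAut(\Lambda,h)$ (rather than a proper subgroup) makes explicit a point the paper leaves implicit, but it is the same underlying computation.
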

\begin{proof}
By Corollary \ref{cor:projectified monodromy group}, the covering $\tmc\rightarrow \mc$ is a Galois cover with deck transformation group $P\Gamma_{\theta}$, so we have
$\smc\simeq \tsmc/P\Gamma_{\theta}\simeq \B_3/P\Gamma_{\theta}$. We have seen from the diagram (\ref{diagram:some groups}) that $PAut(\Lambda, h)/ P\Gamma_{\theta}\simeq Aut^+(V, q)\simeq S_6$, so we get $\smc/S_6 \simeq  \B_3/PAut(\Lambda, h)=\B_3/Aut(\Lambda, h)$.
\end{proof}

\section{The monodromy group and period map: Calabi-Yau threefold case}

In this section, we analyze the monodromy group and period map of the universal family $f_3:\mathcal{X}_{AR}\rightarrow \mathfrak{M}_{3, 6}$, which is the  family of cyclic triple covers of $\P^3$ branched along six hyperplane arrangements in general position. Our strategy is to use the correspondence between this family and the family of curves considered in the previous section.

Let $H_1,\cdots, H_6$ be six hyperplanes in general position in $\P^3$. Let $X$ be the cyclic triple cover  of $\P^3$ branched along the divisor $\sum_{i=1}^6 H_i$. Similarly with the curve case, we have a natural $\Z/3\Z=<\sigma>$ action on $X$, and we have the eigen-subspace decomposition
$$
H^3(X, \ \Z[\omega])=H^3(X, \ \Z[\omega])_{\omega}\oplus H^3(X, \ \Z[\omega])_{\bar{\omega}}
$$
where
$$
H^3(X, \ \Z[\omega])_{\omega^i}:=\{\alpha\in H^3(X, \ \Z[\omega])| \sigma^*\alpha = \omega^i \alpha\};\ \  i=1,2.
$$

By results in Section \ref{sec:Families from hyperplane arrangements}, we know that there exist six distinct points $p_1,\cdots,p_6$ on $\P^1$, such that $H_i$ can be identified with $\{p_i\}\times \P^1\times \cdots \times \P^1$. Moreover, let $C$ be the cyclic triple cover of $\P^1$ branched along $p_1,\cdots,p_6$, then the correspondence between $C$ and $X$ shows that the lattice $(H^3(X, \ \Z[\omega])_{\omega}, h)$ and $(H^1(C, \ \Z[\omega])_{\omega}, h)$ are isomorphic, both of which are rank four $\Z[\omega]$-lattice with signature $(3,1)$. Here the Hermitian form $h$ on $H^3(X, \ \Z[\omega])$ is defined by $h(\alpha,\beta)=-i\ Q( \alpha, \bar{\beta})$, in the same way as the curve case.

Let $(\Lambda, h)$ be a  $\Z[\omega]$-lattice of signature $(3,1)$, and let $\rho: \pi_1(\mx, s)\rightarrow Aut(\Lambda, h)$ be the monodromy representation of the family $f_3$.  A family version of the correspondence in Section \ref{sec:Families from hyperplane arrangements} shows that under  the association isomorphism $\phi: \mc\rightarrow \mx$,   if $s_2=\phi(s_1)$, then
$$
\mathcal{X}_{AR,s_2}\simeq \frac{\mathcal{C}_{s_1}\times \mathcal{C}_{s_1} \times \mathcal{C}_{s_1}}{N\rtimes S_3}. $$

The isometry $\phi_{\Omega}: (\Lambda_{\mathcal{X}_{s_2}}, h)\xrightarrow{\sim}(\bar{\Lambda}_{\mathcal{C}_{s_1}}, h)$ implies the following commutative diagram:
\begin{equation}\label{diagram:monodromy goups}
\begin{diagram}
\pi_1(\mc,s)&  &\rTo^{\sim}_{\phi_*}& &\pi_1(\mx,\phi(s))\\
&\rdTo^{P\rho}& & \ldTo^{P\rho}\\
& &PAut(\Lambda, h)
\end{diagram}
\end{equation}

Keeping the same notations as in Section \ref{sec:monodromy group and period map:curve case}, the commutative diagram (\ref{diagram:monodromy goups}) and  Corollary (\ref{cor:projectified monodromy group}) give the  following proposition.

\begin{proposition}\label{prop:projectified monodromy group:CY case}
The projectified monodromy representation
$$
P\rho: \pi_1(\mx, s)\rightarrow PAut(\Lambda, h)
$$
has image $P\Gamma_{\theta}$.    \proofend
\end{proposition}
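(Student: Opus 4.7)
The plan is to reduce the statement to the curve case (Corollary \ref{cor:projectified monodromy group}) by exploiting the isomorphism $\phi:\mc \xrightarrow{\sim}\mx$ of moduli spaces recalled in Section \ref{sec:Families from hyperplane arrangements} together with the fiberwise identification
\[
\mathcal{X}_{AR,\phi(s_1)} \;\simeq\; \frac{\mathcal{C}_{s_1}\times\mathcal{C}_{s_1}\times\mathcal{C}_{s_1}}{N\rtimes S_3}.
\]
The commutative diagram (\ref{diagram:monodromy goups}) already stated in the text is essentially what must be verified; once one has it, the proposition is immediate, because by Corollary \ref{cor:projectified monodromy group} the composition $P\rho\circ\phi_*$ has image $P\Gamma_\theta$, hence so does $P\rho:\pi_1(\mx,\phi(s))\to PAut(\Lambda,h)$.

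First I would make precise the isometry $\phi_\Omega:(\Lambda_{\mathcal{X}_{AR,s_2}},h)\xrightarrow{\sim}(\bar{\Lambda}_{\mathcal{C}_{s_1}},h)$ alluded to in the text. By Proposition \ref{prop:summarize Hodge structures on X, C}(2) (with $n=3$, $r=3$), the $\bar\omega$-eigenspace on $H^3(X,\C)$ is canonically identified with $\wedge^3 H^1(C,\C)_{(2)}$, and the latter is one-dimensional, sitting inside $\wedge^3$ of the four-dimensional $H^1(C,\C)_{(2)}$ in a manner that identifies the unitary action on $H^1(C,\Z[\omega])_\omega$ with the induced unitary action on $H^3(X,\Z[\omega])_\omega$. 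Explicitly, viewing $H^3(X)_{\omega}$ as a direct summand of the $(N\rtimes S_3)$-invariant part of $H^3(C^3)$, one gets a $\Z[\omega]$-module isomorphism $H^3(X,\Z[\omega])_\omega\simeq H^1(C,\Z[\omega])_\omega$; a short check with the intersection pairing $Q$ shows that this isomorphism carries $h_X=-iQ_X(\cdot,\overline{\cdot})$ to $h_C=-iQ_C(\cdot,\overline{\cdot})$ up to a positive rational scalar, hence is an isometry of $\Z[\omega]$-Hermitian lattices of signature $(3,1)$.

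Next I would propagate this pointwise isometry to a family-wise statement: under $\phi$, the local system $R^3 f_{3,*}\Z[\omega]_\omega$ on $\mx$ pulls back to the local system $R^1 f_*\Z[\omega]_\omega$ on $\mc$. This is a consequence of the fact that the whole construction $(p_1,\dots,p_6)\mapsto (H_1,\dots,H_6)\mapsto X=C^3/(N\rtimes S_3)$ is functorial in the parameters and deforms flatly; no monodromy choices are involved in passing from $C$ to $X$, so the isomorphism constructed in the previous paragraph is parallel for the Gauss-Manin connection. Consequently the monodromy representations fit into the triangle (\ref{diagram:monodromy goups}), and projectifying yields the desired commutativity.

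The only nontrivial step is the verification that the lattice isomorphism $H^3(X,\Z[\omega])_\omega\simeq H^1(C,\Z[\omega])_\omega$ is an isometry of \emph{Hermitian} lattices; apart from careful bookkeeping with the K\"unneth formula and the invariants under $N\rtimes S_3$, this is elementary, since both lattices have signature $(3,1)$ and the pairings are defined by the same formula $h=-iQ(\cdot,\overline{\cdot})$. Once this is in hand the proposition follows formally from Corollary \ref{cor:projectified monodromy group} via the commutative diagram (\ref{diagram:monodromy goups}).
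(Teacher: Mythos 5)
Your argument is essentially the paper's: the proposition is deduced there in exactly the same way, by combining the commutative triangle (\ref{diagram:monodromy goups}) --- which rests on the fiberwise isometry $\phi_{\Omega}$ induced by the correspondence $\mathcal{X}_{AR,\phi(s_1)}\simeq \mathcal{C}_{s_1}\times\mathcal{C}_{s_1}\times\mathcal{C}_{s_1}/(N\rtimes S_3)$ --- with Corollary \ref{cor:projectified monodromy group} from the curve case. The only blemish is a harmless slip in your elaboration of the isometry: $\wedge^3 H^1(C,\C)_{(2)}$ is four-dimensional (being $\wedge^3$ of a four-dimensional space), not one-dimensional --- it is only the top Hodge piece $H^{3,0}(X)$, corresponding to $\wedge^3$ of the three-dimensional $(1,0)$-part of the relevant eigenspace, that is a line --- but this does not affect the argument.
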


Moreover, by Proposition (\ref{prop:summarize Hodge structures on X, C}), we have the decompositions
\begin{equation}\label{equation:decomposition}
\begin{split}
 H^3(X, \ \C)_{\omega}=H^{3,0}(X, \ \C)\oplus H^{2,1}(X, \ \C)\\
 H^3(X, \ \C)_{\bar{\omega}}=H^{3,0}(X, \ \C)\oplus H^{2,1}(X, \ \C).
\end{split}
\end{equation}
By associating the isomorphic class of $X$ with the point $[H^{3,0}(X, \ \C)]$ in $\B_3\simeq \{v\in \P(\Lambda\otimes_{\Z[\omega]}\C | h(v, v)<0)\}$, we get the period map $P_X: \mx^{uni}\rightarrow \B_3$, where $\mx^{uni}$ is the universal cover of $\mx$. As in the curve case, the period map $P_X$ is equivariant under the projectified monodromy representation $P\rho: \pi_1(\mx, s)\rightarrow    PAut(\Lambda, h)$.
Let $K$ be the kernel of $P\rho$, then $P_X$ descends to the   holomorphic map  $\tmx:=\mx^{uni}/K\rightarrow \B_3$, still denoted by $P_X$.

The association isomorphism $\phi: \mc\xrightarrow{\sim} \mx$ gives the commutative diagram relating period maps:
\begin{diagram}
\mc^{uni}&  &\rTo^{\sim}_{\phi}& & \mx^{uni}\\
&\rdTo^{P_C}& & \ldTo^{P_X}\\
& & \B_3
\end{diagram}

This diagram descends to the following commutative diagram:
\begin{equation}\label{equ:diagram for smx and smc}
\begin{diagram}
\tmc&  &\rTo^{\sim}_{\phi}& & \tmx\\
&\rdTo^{P_C}& & \ldTo^{P_X}\\
& & \B_3
\end{diagram}
\end{equation}

Recall $\smc$ is the moduli space of stable six ordered points on $\P^1$.
Let $\smx$ be the moduli space of stable six ordered hyperplanes in $\P^3$, which consists six ordered hyperplanes in $\P^3$  with at worst four-fold intersection point.
Then the association isomorphism $ \mc\xrightarrow{\sim} \mx$ extends to an isomorphism $\smc\xrightarrow{\sim} \smx$, and further extends to an isomorphism between Fox completions:
\begin{diagram}
\tsmc&  \rTo^{\sim}_{\phi_{ass}} & \tsmx\\
\dTo &     & \dTo\\
\smc& \rTo^{\sim} & \smx
\end{diagram}

We have the following proposition.
\begin{proposition}\label{prop:commutative dia for period mappings}
There exists a unique isomorphism  $\tsmx\xrightarrow{P_X} \B_3$ extending  the period map $\tmx\xrightarrow{P_X} \B_3$. Moreover, the following diagram is commutative:
\begin{diagram}
\tsmc&  \rTo^{P_C}_{\sim} & \B_3\\
\dTo^{\phi_{ass}}_{\wr} & \ruTo^{P_X}_{\sim}    & \\
\tsmx& &
\end{diagram}
\end{proposition}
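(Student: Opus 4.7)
The plan is to obtain the extended period map by transporting the Deligne--Mostow isomorphism on the curve side across the association isomorphism $\phi_{ass}$. Concretely, I would define a candidate $\widetilde{P}_X : \tsmx \to \B_3$ by
$$
\widetilde{P}_X := P_C \circ \phi_{ass}^{-1},
$$
where $P_C: \tsmc \xrightarrow{\sim} \B_3$ is the Deligne--Mostow extension used in Proposition \ref{prop:period iso: curve case}, and $\phi_{ass} : \tsmc \xrightarrow{\sim} \tsmx$ is the isomorphism between Fox completions displayed just before the proposition. By construction $\widetilde{P}_X$ is a composition of two isomorphisms, hence itself an isomorphism, and it satisfies $\widetilde{P}_X \circ \phi_{ass} = P_C$ tautologically, which is precisely the commutativity required by the triangle in the statement.

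Next, I would verify that $\widetilde{P}_X$ genuinely extends the original $P_X : \tmx \to \B_3$. Since $\phi_{ass}$ is by definition the extension of $\phi$ across the boundary divisor, its restriction to $\tmc$ is the isomorphism $\phi$ appearing in diagram (\ref{equ:diagram for smx and smc}). Therefore, on the dense open subset $\tmx \subset \tsmx$,
$$
\widetilde{P}_X = P_C \circ \phi^{-1} = (P_X \circ \phi) \circ \phi^{-1} = P_X,
$$
where the second equality uses the commutativity of (\ref{equ:diagram for smx and smc}). Thus $\widetilde{P}_X$ is a bona fide extension of $P_X$, and I would rename it $P_X$.

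Finally, uniqueness of the extension is a soft point: $\tmx$ is dense in its Fox completion $\tsmx$ and $\B_3$ is Hausdorff, so any continuous extension of $P_X$ to $\tsmx$ is determined by its restriction to $\tmx$. Since $\widetilde{P}_X$ is holomorphic on $\tsmx$ by construction, the uniqueness claim follows.

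I do not anticipate any serious technical obstacle here, since the proposition is essentially a bookkeeping consequence of what has already been proved: the analytic content, namely the extension of the period map across the boundary divisor and its globality as an isomorphism, lives entirely on the curve side and is supplied by Deligne--Mostow, while the compatibility of $\phi_{ass}$ with $\phi$ has been built into the construction of $\phi_{ass}$ in the diagram preceding the proposition. The only step worth stating explicitly is the identity $\widetilde{P}_X|_{\tmx} = P_X$, which, as shown above, is a direct rearrangement of (\ref{equ:diagram for smx and smc}).
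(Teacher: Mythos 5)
Your proposal is correct and follows essentially the same route as the paper: both arguments rest on the Deligne--Mostow isomorphism $P_C:\tsmc\xrightarrow{\sim}\B_3$ together with the compatibility diagram (\ref{equ:diagram for smx and smc}) and the fact that $\phi_{ass}$ restricts to $\phi$ over $\tmc$. The only cosmetic difference is that you obtain existence of the extension by transporting $P_C$ through $\phi_{ass}^{-1}$ and then checking the restriction to $\tmx$, whereas the paper first extends $P_X$ across the normal crossing boundary divisor via Riemann's extension theorem and then deduces commutativity by density; the two arguments produce the same map.
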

\begin{proof} By \cite{DM}, the period map $P_C:\mc\rightarrow \B_3$ extends to an isomorphism $\tsmc\xrightarrow{\sim} B_3$.
Since $\tsmx \setminus \smx$ is a normal crossing divisor, the extendability and uniqueness follow from Riemann's extension theorem. The commutative diagram follows from the diagram (\ref{equ:diagram for smx and smc}).
\end{proof}

Similarly as Proposition \ref{prop:period iso: curve case}, we have the following proposition.
\begin{proposition}\label{prop:period iso: CY case}
The period mapping induces a bijective map: $\smx/S_6 \xrightarrow{\sim} \B_3/Aut(\Lambda, h)$. \proofend
\end{proposition}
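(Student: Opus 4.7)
The plan is to imitate the argument for Proposition \ref{prop:period iso: curve case}, transporting everything to the Calabi--Yau side via the association isomorphism $\phi_{ass}\colon\tsmc\xrightarrow{\sim}\tsmx$ and the identification of period maps in Proposition \ref{prop:commutative dia for period mappings}. The key observation is that all the group-theoretic input needed in the curve case has already been set up for the threefold case: Proposition \ref{prop:projectified monodromy group:CY case} computes the projectified monodromy of $f_3$ as the same group $P\Gamma_\theta$, and Proposition \ref{prop:commutative dia for period mappings} upgrades the curve-case isomorphism $P_C\colon\tsmc\xrightarrow{\sim}\B_3$ of Deligne--Mostow to an isomorphism $P_X\colon\tsmx\xrightarrow{\sim}\B_3$.

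First I would argue that $\tmx\to\mx$ is a Galois covering with deck transformation group $P\Gamma_\theta$. By construction $\tmx=\mx^{uni}/K$ where $K$ is the kernel of $P\rho$, so $\tmx\to\mx$ is Galois with group $\pi_1(\mx,s)/K\cong\mathrm{Im}(P\rho)=P\Gamma_\theta$ by Proposition \ref{prop:projectified monodromy group:CY case}. The Fox completion $\tsmx\to\smx$ is also Galois with the same group $P\Gamma_\theta$, since Fox completion is functorial and the ramification behaviour along $\smx\setminus\mx$ is inherited from that of $\smc\setminus\mc$ via the isomorphism $\phi_{ass}$.

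Next, using that $P_X\colon\tsmx\xrightarrow{\sim}\B_3$ is an isomorphism and is equivariant with respect to $P\rho$, passing to the quotient gives
\begin{equation*}
\smx \;\simeq\; \tsmx/P\Gamma_\theta \;\xrightarrow{\;\sim\;}\; \B_3/P\Gamma_\theta.
\end{equation*}
To quotient further by $S_6$, I would invoke the exact sequence in diagram (\ref{diagram:some groups}), namely
\begin{equation*}
1\rightarrow P\Gamma_\theta \rightarrow PAut(\Lambda,h) \rightarrow Aut^{+}(V,q)\rightarrow 1,
\end{equation*}
and the identification $Aut^{+}(V,q)\simeq S_6$ (also from diagram (\ref{diagram:some groups})). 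This identifies the residual $S_6$-action on $\smx\simeq \B_3/P\Gamma_\theta$ with the natural action of $PAut(\Lambda,h)/P\Gamma_\theta$ on $\B_3/P\Gamma_\theta$, so that
\begin{equation*}
\smx/S_6 \;\xrightarrow{\;\sim\;}\; \B_3/PAut(\Lambda,h) \;=\; \B_3/Aut(\Lambda,h),
\end{equation*}
as required.

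The only genuinely non-formal point is the compatibility of the $S_6$-actions on the two sides: one must check that the permutation action of $S_6$ on the ordering of the six hyperplanes in $\smx$ corresponds, under $P_X$, to the action of the cosets $PAut(\Lambda,h)/P\Gamma_\theta$ on $\B_3/P\Gamma_\theta$. I would verify this by again routing through the curve case via $\phi_{ass}$: the association is manifestly $S_6$-equivariant (permuting the six points on $\P^1$ permutes the six hyperplanes $H_i=\{p_i\}\times\P^1\times\cdots\times\P^1$ in the same way), and the corresponding compatibility on the curve side is exactly what underlies the proof of Proposition \ref{prop:period iso: curve case} via the top row of diagram (\ref{diagram:some groups}). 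Everything else is a formal diagram chase.
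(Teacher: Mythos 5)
Your argument is exactly the one the paper intends: the paper simply asserts the proposition "similarly as Proposition \ref{prop:period iso: curve case}", and your proof transports the curve-case argument (Galois cover $\tmx\to\mx$ with group $P\Gamma_\theta$, the isomorphism $P_X\colon\tsmx\xrightarrow{\sim}\B_3$, and the identification $PAut(\Lambda,h)/P\Gamma_\theta\simeq S_6$ from diagram (\ref{diagram:some groups})) via the association isomorphism, which is precisely the intended route. Your extra care about the compatibility of the two $S_6$-actions is a point the paper leaves implicit, and it is handled correctly.
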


As a corollary, we have the following global Torelli type theorem.
\begin{theorem}\label{thm:global Torelli}
 Suppose $a=(H_1,\cdots, H_6)$ and $b=(H^{'}_1,\cdots,H^{'}_6)$ are two hyperplane arrangements in general position in $\P^3$. Let $X_a$(resp. $X_b$) be the cyclic triple cover of $\P^3$ branched along $a$ (resp. $b$). Then the polarized $\Z$-Hodge structures $H^3(X_a, \ \Z)$ and $H^3(X_b, \ \Z)$ are isomorphic if and only if after a permutation, the hyperplane arrangements $\{H_1,\cdots, H_6\}$ and $\{H^{'}_1,\cdots,H^{'}_6\}$ are projectively equivalent.
\end{theorem}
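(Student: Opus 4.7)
The plan is to derive Theorem \ref{thm:global Torelli} as a direct corollary of Proposition \ref{prop:period iso: CY case}, restricted to the open locus $\mx/S_6 \subset \smx/S_6$. The ``if'' direction is trivial: a projective equivalence of ordered arrangements after permutation yields an isomorphism $X_a \simeq X_b$ of the triple covers, which induces the polarized Hodge isomorphism on $H^3$. All the work lies in the converse.

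Assume $\phi : H^3(X_a,\Z) \xrightarrow{\sim} H^3(X_b,\Z)$ is an isomorphism of polarized $\Z$-Hodge structures. The first step is to promote $\phi$ to an isometry of the rank four $\Z[\omega]$-Hermitian lattices
\[
\phi_\omega : (H^3(X_a,\Z[\omega])_\omega,\, h) \xrightarrow{\sim} (H^3(X_b,\Z[\omega])_\omega,\, h).
\]
Since $X_a$ and $X_b$ are Calabi-Yau with $h^{3,0}=1$, the lines $H^{3,0}(X_a)$ and $H^{3,0}(X_b)$ are each contained in a single $\sigma$-eigenspace (say the $\bar\omega$-eigenspace), and the eigenspace decomposition of $H^3(\cdot,\C)$ is recovered from the Hodge decomposition together with its complex conjugate via the $\sigma$-eigentype of $H^{3,0}$. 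Because $\phi$ preserves the Hodge decomposition, polarization, and real structure, it must carry the $\bar\omega$-eigenspace to the $\bar\omega$-eigenspace (after possibly composing with $\sigma_b$ or complex conjugation, which we absorb), and hence intertwines the $\sigma$-actions. Restricting to $\omega$-eigenspaces yields $\phi_\omega$, which is then automatically an isometry for the polarization-induced Hermitian form $h$.

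Next, fix once and for all a reference $\Z[\omega]$-lattice $(\Lambda,h)$ of signature $(3,1)$, together with identifications $H^3(X_a,\Z[\omega])_\omega \simeq \Lambda \simeq H^3(X_b,\Z[\omega])_\omega$ as used in the definition of the period map. With respect to these identifications, $\phi_\omega$ becomes an element of $Aut(\Lambda,h)$ carrying the line $H^{3,0}(X_a)\subset \Lambda\otimes_{\Z[\omega]}\C$ to $H^{3,0}(X_b)$; hence the periods $P_X([a])$ and $P_X([b]) \in \B_3$ represent the same class in $\B_3/Aut(\Lambda,h)$. By Proposition \ref{prop:period iso: CY case} (whose bijection restricts to an injection on the open subset $\mx/S_6 \hookrightarrow \smx/S_6$), this forces $[a]=[b]$ in $\mx/S_6$, which unwinds precisely to the existence of a permutation $\tau \in S_6$ and a $g\in PGL(4)$ with $g(H_{\tau(i)}) = H'_i$ for all $i$.

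The main obstacle I anticipate is the promotion step: verifying that the abstract isomorphism of polarized $\Z$-Hodge structures automatically commutes with the cyclic covering automorphisms $\sigma_a$ and $\sigma_b$ (up to the ambiguity absorbed by $Aut(\Lambda,h)$, which contains the scalars $\{\pm 1, \pm\omega, \pm\omega^2\}$). This is where the Calabi-Yau hypothesis $h^{3,0}=1$ is essential, since it canonically pins down which eigenspace of $\sigma$ contains the holomorphic volume form and thereby lets the Hodge data alone detect the $\Z[\omega]$-module structure. Once this is in place, the rest is bookkeeping with the period bijection of Proposition \ref{prop:period iso: CY case}.
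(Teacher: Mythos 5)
Your proposal is correct and follows essentially the same route as the paper: promote the polarized Hodge isometry to an isometry of the $\omega$-eigenspace $\Z[\omega]$-lattices (which the paper justifies, as you do, by noting that each Hodge piece of $H^3$ lies in a single $\sigma$-eigenspace, so any Hodge isometry is automatically $\sigma$-equivariant), conclude that $a$ and $b$ have the same image in $\B_3/\Aut(\Lambda,h)$, and invoke the bijection of Proposition \ref{prop:period iso: CY case}. Your treatment of the eigenspace-labelling ambiguity is in fact slightly more careful than the paper's one-line appeal to the decomposition (\ref{equation:decomposition}).
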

\begin{proof}
Let $\phi:H^3(X_a, \ \Z)\xrightarrow{\sim}H^3(X_b, \ \Z)$ be an isomorphism of polarized $\Z$-Hodge structures. By the decomposition (\ref{equation:decomposition}), we see that $\phi$ is compatible with the $\Z/3\Z$-actions. Then $\phi$ induces an isomorphism $H^3(X_a, \ \Z[\omega])_{\omega}\xrightarrow{\sim}H^3(X_b, \ \Z[\omega])_{\omega}$.  From this we see $a$ and $b$ have the same image under the period map $\smx \xrightarrow{} \B_3/Aut(\Lambda, h)$. Then Proposition \ref{prop:period iso: CY case} implies $a$ and $b$ represent the same point in $\smx/S_6$, which means exactly that after a permutation, the hyperplane arrangements $a$ and $b$ are projectively equivalent.
\end{proof}

\section{Analysis of stable degenerations }
In this section, we want to extend the global Torelli type Theorem \ref{thm:global Torelli} to the stable hyperplane arrangement case.

We first analyze boundary correspondence under the period mapping.
Recall $(\Lambda, h)$ is a rank four $\Z[\omega]$ lattice with signature $(3,1)$, and we realize $\B_3$  as the open subset of $\P(\Lambda\otimes_{\Z[\omega]}\C)$ consisting of negative lines. We call a vector $r\in \Lambda$  a short root, if $h(r,r)=1$.  Denote $R$ for  the set of short roots in $\Lambda$. For any $r\in R$, define the hyperplane orthogonal to $r$:
$$
H_r:=\{[v]\in \B_3\subset \P(\Lambda\otimes_{\Z[\omega]}\C) | v\in \Lambda, \ h(v,r)=0 \}.
$$

We write $\mathcal{H}:=\cup_{r\in R}H_r$. Proposition \ref{prop:commutative dia for period mappings} gives an isomorphism $P_X:\tsmx\xrightarrow{\sim} \B_3$.

\begin{proposition}
$P_X$ induces  isomorphisms $\tmx\xrightarrow{\sim}\B_3\setminus \mathcal{H}$ and $\tsmx\setminus \tmx \xrightarrow{\sim} \mathcal{H}$.
\end{proposition}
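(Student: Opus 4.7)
The plan is to transport the statement to the curve case using the identification in Proposition \ref{prop:commutative dia for period mappings}. That proposition supplies an isomorphism $\phi_{ass}: \tsmc \xrightarrow{\sim} \tsmx$ intertwining $P_C$ and $P_X$, and the construction of Section \ref{sec:Families from hyperplane arrangements} shows that $\phi_{ass}$ restricts to $\tmc \xrightarrow{\sim} \tmx$, since a configuration of six ordered points on $\P^1$ has at worst a double coincidence exactly when the corresponding six hyperplanes in $\P^3$ have at worst a four-fold intersection but are not yet in general position. Hence it suffices to prove the analogous statement for $P_C$, namely $P_C(\tmc) = \B_3 \setminus \mathcal{H}$ and $P_C(\tsmc \setminus \tmc) = \mathcal{H}$.

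First I would show $P_C(\tsmc\setminus\tmc) \subset \mathcal{H}$. A generic boundary point $p$ of $\smc$ corresponds to a stable configuration in which exactly two of the six points on $\P^1$ have collided. Choose a holomorphic disc $\Delta \subset \smc$ meeting the boundary divisor transversally at $p$ with $\Delta^* \subset \mc$. Over $\Delta^*$ the cyclic triple cover $C_t$ is smooth, and as $t \to 0$ one obtains a vanishing cycle $\delta \in H_1(C_t, \Z)$. The eigenspace decomposition projects $\delta$ to a class $r \in H^1(C_t, \Z[\omega])_\omega \cong \Lambda$, and a local computation of the intersection form shows $h(r,r)=1$, so $r$ is a short root. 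The Picard--Lefschetz formula, adapted to the cyclic action, identifies the local monodromy on $H^1(C_t, \Z[\omega])_\omega$ as an $\omega$-reflection in $r$; by equivariance of $P_C$, the limit $P_C(p)$ must lie in the fixed hyperplane $H_r$ of this reflection, giving $P_C(p) \in \mathcal{H}$.

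Next I would argue the reverse inclusion $P_C(\tmc) \cap \mathcal{H} = \emptyset$. Suppose $s \in \tmc$ and $P_C(s) \in H_r$ for some short root $r \in R$. Then $r$ is primitively embedded and orthogonal to $H^{1,0}(C_s, \C)_{\bar\omega}$ with respect to $h$; by the description of the Hodge decomposition on $H^1(C_s, \C)_\omega$ given in Section \ref{sec:monodromy group and period map:curve case}, this means $r$ is a $(0,1)$-class, i.e.\ an effective vanishing cycle on the smooth curve $C_s$. Combined with the fact (shown in the previous paragraph) that every short root in $R$ arises as a vanishing cycle from some boundary component, and that $P_C: \tsmc \to \B_3$ is a \emph{bijection}, this forces $s$ to lie on the boundary --- contradicting $s \in \tmc$. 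Once both inclusions are known, bijectivity of $P_C$ on $\tsmc$ (Proposition \ref{prop:commutative dia for period mappings}) yields the two claimed isomorphisms.

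The main obstacle is the disjointness $P_C(\tmc) \cap \mathcal{H} = \emptyset$: one must make precise the identification between short roots in $\Lambda$ and vanishing cycles of the cyclic triple cover family, uniformly across all boundary divisors. This relies on Picard--Lefschetz together with the $P\Gamma_\theta$-equivariance of $P_C$ and the $S_6$-symmetry interchanging the boundary components, so that it suffices to check one component and transport by the monodromy action on $R$. Transporting the conclusion back along $\phi_{ass}$ then yields both isomorphisms for $P_X$.
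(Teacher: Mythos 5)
Your overall architecture (reduce to the curve case, identify the boundary of $\tsmc$ with $\mathcal{H}$, then invoke bijectivity of the extended period map) is reasonable, and your Picard--Lefschetz argument for the inclusion $P_C(\tsmc\setminus\tmc)\subset\mathcal{H}$ is plausible. But the step you yourself identify as the "main obstacle" --- the disjointness $P_C(\tmc)\cap\mathcal{H}=\emptyset$ --- is exactly where your argument breaks down, and it breaks in two ways. First, the Hodge-theoretic claim is unsound: from $h(v,r)=0$, where $v$ spans the period line and $r$ is a short root, you cannot conclude that $r$ is "a $(0,1)$-class, i.e.\ an effective vanishing cycle on the smooth curve $C_s$." Unlike the K3 situation, where an integral $(1,1)$-class of square $-2$ orthogonal to the period produces a geometric obstruction, an integral class in $H^1$ of a smooth curve orthogonal to a piece of the Hodge filtration carries no such obstruction, and "vanishing cycle on a smooth curve" is not a meaningful notion. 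Second, the conclusion "this forces $s$ to lie on the boundary" is circular: to deduce from $P_C(s)\in H_r$ and injectivity of $P_C$ that $s$ is a boundary point, you need $H_r\subset P_C(\tsmc\setminus\tmc)$, i.e.\ that the boundary surjects onto $\mathcal{H}$ --- which is precisely the other half of what is being proved. Your previous paragraph only gives the opposite containment, plus the statement that each $H_r$ \emph{meets} the image of the boundary, which is weaker.

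The paper closes this gap by a purely group-theoretic covering-space argument, which you should compare with: for a short root $r$, the $\omega$-reflection $\alpha_r$ along $r$ lies in $P\Gamma_\theta$ (because $1-\omega$ and $\theta$ generate the same ideal of $\Z[\omega]$, so $\alpha_r$ reduces to the identity mod $\theta$), it fixes $H_r$ pointwise, and it is realized by a deck transformation $\gamma_r$ of the Galois cover $\tmx\rightarrow\mx$. If $x\in\tmx$ had $P_X(x)\in H_r$, then equivariance and injectivity of $P_X$ would give $\gamma_r(x)=x$, contradicting the freeness of the deck action of a Galois cover. This immediately yields $\mathcal{H}\subset P_X(\tsmx\setminus\tmx)$, and the equality is then finished by matching the irreducible components of $\tsmx\setminus\tmx$ with the hyperplanes $H_r$ (both form the corresponding finite set of orbits under the deck group). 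Alternatively, your Picard--Lefschetz inclusion could be completed without any disjointness argument by this same component-counting step (the image of each irreducible boundary divisor under the isomorphism $P_X$ is an irreducible divisor contained in, hence equal to, some $H_r$, and the orbit counts match), but as written your proposal neither carries out that count nor supplies a valid proof of disjointness.
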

\begin{proof}
Note first that  $\tsmx\setminus \tmx $ (resp. $ \mathcal{H}$ ) is a union of $15$ irreducible hypersurfaces in the complex manifold $\tsmx$ (resp. $\B_3$). Since $P_X:\tsmx\xrightarrow{\sim} \B_3$ is an isomorphism, it suffices to show $P_X(\tmx)\subset \B_3\setminus \mathcal{H}$. If $x\in \tmx$ and $P_X(x)\in H_r$ for some $r\in R$,  then we can choose an $\omega$-reflection $\alpha_r$ along $r$ in $P\Gamma_{\theta}$. In particular, $r$ is a fixed point of $\alpha_r$. Next we  choose $\gamma_r\in \pi_1(\tmx, s)$ satisfying $P\rho(\gamma_r)=\alpha_r$. Since the period mapping $\tmx\xrightarrow{P_X} \B_3$ is $\pi_1(\tmx, s)$-equivariant, we see $P_X(\gamma_r(x))=\alpha_r(P_X(x))=P_X(x)$. Since $P_X$ is injective, we see $\gamma_r(x)=x$. Note the cover $\tmx\rightarrow \mx$ is Galois, so any nontrivial  deck transformation has no fixed points. This implies $\gamma_r$ belongs to the kernel of the monodromy representation $P\rho: \pi_1(\tmx, s)\rightarrow P\Gamma_{\theta}$, and hence $\alpha_r=P\rho(\gamma_r)=id$ is the identity element in $P\Gamma_{\theta}$. This contradicts with the chosen of $\alpha_r$. So we get   $P_X(\tsmx\setminus \tmx)\subset \mathcal{H}$.
\end{proof}

In order to  give a geometric interpretation of the period mapping $P_X$ on the boundary $\tsmx\setminus \tmx$, we study the Hodge structure on cyclic triple covers of $\P^3$ branched along stable six hyperplanes.
For $a=(H_1,\cdots, H_6)\in \smx\backslash \mx$, we can  still define $X_a$ to be the cyclic triple cover of $\P^3$ branched along the divisor $\sum_{i=1}^6H_i$.
\begin{proposition}\label{prop:stable CY has pure Hodge structure}
For each $a\in \smx$, Deligne's mixed  Hodge structure on $H^3(X_a, \Q)$ is pure.
\end{proposition}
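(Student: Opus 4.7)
My plan is to realize $X_a$, up to normalization, as a finite quotient of a smooth projective variety, so that purity of $H^3$ follows automatically. Under the isomorphism $\phi: \smc \xrightarrow{\sim} \smx$, the point $a \in \smx$ corresponds to a stable configuration $(p_1, \ldots, p_6)$ of six points on $\P^1$ in which at most two points coincide. Let $C_a$ be the cyclic triple cover of $\P^1$ branched along $\sum p_i$ and $\tilde{C}_a$ its normalization: where two branch points collide, the local equation $y^3 = x^2$ is a cusp, and $\tilde{C}_a$ is a smooth projective curve carrying an induced $\Z/3\Z$-action.

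Extending the identification $X \simeq C^3/(N \rtimes S_3)$ of Section~\ref{sec:Families from hyperplane arrangements} to the stable locus, I expect to obtain a finite morphism
$$\nu: Y_a := \tilde{C}_a^3/(N \rtimes S_3) \longrightarrow X_a$$
that identifies $Y_a$ with the normalization of $X_a$. The construction mimics \cite{SXZ}, with $\tilde{C}_a$ replacing $C_a$ at the coincidence points; normality of $Y_a$ (as a finite quotient of the smooth projective variety $\tilde{C}_a^3$) together with a degree count forces $\nu$ to be the normalization morphism. Since $Y_a$ has only quotient singularities, its cohomology is pure:
$$H^k(Y_a, \Q) \;\cong\; H^k(\tilde{C}_a^3, \Q)^{N \rtimes S_3}$$
is pure of weight $k$ for all $k$; in particular $H^3(Y_a, \Q)$ is pure of weight $3$.

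Finally, I would transfer purity to $H^3(X_a, \Q)$ using that $\nu$ is finite, birational, and an isomorphism outside the codimension-$\ge 2$ coincidence stratum of the branch divisor. A Mayer--Vietoris argument for the normalization (or an appeal to the decomposition theorem for the finite morphism $\nu$) yields an isomorphism $\nu^*: H^3(X_a, \Q) \xrightarrow{\sim} H^3(Y_a, \Q)$ of mixed Hodge structures, whence $H^3(X_a, \Q)$ is pure. The principal obstacle is this last step: one must verify that the non-normal locus of $X_a$, carved out by the coincidence of hyperplanes, does not contribute weight-$<3$ classes to $H^3$; this reduces to a local cohomological analysis of the singular model $y^3 = s_1^2 s_3 s_4 s_5 s_6$ along $H_1 = H_2$.
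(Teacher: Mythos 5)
Your argument rests on a misidentification of what the stable degenerations actually are, and this breaks the proof at the first step. The space $\smx$ parametrizes six \emph{distinct} hyperplanes with at worst a fourfold intersection point: under the association isomorphism $\smc\xrightarrow{\sim}\smx$ (which is Gale duality of GIT quotients, not the naive $Sym^3$ construction on the boundary), a collision $p_i=p_j$ of two points on $\P^1$ corresponds to \emph{four distinct hyperplanes becoming concurrent}, not to $H_i=H_j$. Hence the branch divisor $\sum_{i=1}^6 H_i$ stays reduced, $X_a$ is already normal (it is a hypersurface $y^3=f$ with reduced $f$, Cohen--Macaulay and regular in codimension one), and there is no non-normal locus "$H_1=H_2$" with local model $y^3=s_1^2s_3s_4s_5s_6$ to analyze. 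Your candidate $Y_a=\tilde C_a^3/(N\rtimes S_3)$ is a genuinely different variety: it is a triple cover of $\P^3$ branched along only five distinct hyperplanes, and a dimension count already rules out your claimed isomorphism $\nu^*\colon H^3(X_a,\Q)\simeq H^3(Y_a,\Q)$ --- one gets $\dim\wedge^3 H^1(\tilde C_a,\C)_{(i)}=1$ for each eigenvalue, so $\dim H^3(Y_a,\Q)=2$, whereas for one fourfold point $\dim H^3(X_a,\Q)=6$. The actual difficulty is the isolated threefold singularity of $X_a$ over the fourfold point, with local equation $y^3=x_1x_2x_3(x_1+x_2+x_3)$; the paper handles it by realizing $X_a$ as the quotient by $N_1$ of a $(3,3)$ complete intersection $Y_a\subset\P^5$ with isolated singularities, blowing up those points (the exceptional divisors are smooth cubic surfaces), and showing via Mayer--Vietoris that $H^3(X_a,\Q)\simeq H^3(\tilde Y_a,\Q)^{N_1}$ with $\tilde Y_a$ smooth projective.

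Even granting your setup, the proof would still be incomplete: the passage from purity of $H^3$ of the "normalization" to purity of $H^3(X_a,\Q)$ is precisely the nontrivial content of the proposition (for a non-normal or singular variety these groups can differ and extra weight-$<3$ classes can appear), and you explicitly defer it as "the principal obstacle" rather than carrying out the required local cohomological computation. So the key step is missing as well as misdirected.
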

\begin{proof}
Denote the homogeneous coordinates on $\P^3$ by $[X_0:\cdots: X_3]$, and for $1\leq i\leq 6$, let $\ell_i$ be the defining homogeneous linear equation of the hyperplane $H_i$. Over  the rational function field $K(\P^3)$ of $\P^3$,  there exists the finite Galois extension $L:=K(\P^3)(\sqrt[3]{\frac{\ell_2}{\ell_1}}, \cdots, \sqrt[3]{\frac{\ell_6}{\ell_1}})$. Define  $Y_a$ to be  the normalization of $\P^3$ in the Galois extension field $L$. It is not hard  to see that $Y_a$ is a complete intersection of two degree three hypersurfaces in $\P^5$, and $Y_a$ is smooth if $a\in \mx$. Moreover, the finite abelian group $N_1$, which is defined as the kernel of the summation homomorphism $\sum_{i=0}^5\Z/3\Z\rightarrow \Z/3\Z $, acts on $Y_a$, and $X_a$ is isomorphic to the quotient variety $Y_a/N_1$. For details of these claims, one can see \cite{SXZ2}, section 2.2.

Since $X_a\simeq Y_a/N_1$, we obtain that the  (mixed) Hodge structure  $H^3(X_a, \ \Q)$ is isomorphic to $H^3(Y_a, \ \Q)^{N_1}$, the $N_1$-invariant part of $H^3(Y_a, \ \Q)$.

If $a\in \mx$, then the mixed  Hodge structure on $H^3(Y_a, \ \Q)$ is pure, since in this case $Y_a$ is a smooth projective variety. So the mixed Hodge structure on $H^3(X_a, \ \Q)\simeq H^3(Y_a, \ \Q)^{N_1}$ is also pure.

If $a=(H_1,\cdots, H_6)\in \smx\backslash \mx$, we can see that $Y_a$ has only isolated singularities. In order to show the mixed Hodge structure on $H^3(X_a, \ \Q)\simeq H^3(Y_a, \ \Q)^{N_1}$ is pure, we can assume, without loss of generality, that the first four hyperplanes $H_1$, $H_2$, $H_3$, $H_4$ pass through a common point $p=[1:0:0:0]\in \P^3$, and $\sum_{i=1}^6H_i$ is a normal crossing  divisor on $\P^3\backslash \{p\}$.  By an automorphism of $\P^3$, we can assume the defining equations of $H_i$ ($1\leq i\leq 6$) are the following:
\begin{equation}\notag
\ell_1=X_0;\
\ell_2=X_1;\
\ell_3=X_2;\
\ell_4=X_3;\
\ell_5=X_1+X_2+X_3;\
\ell_6=X_0+b_1X_1+b_2X_2+b_3X_3.
\end{equation}
Here $b_i$ ($1\leq i\leq 3$) are complex numbers. Then we can see $Y_a$ is the complete intersection in $\P^5$ defined by the following two homogeneous equations:
\begin{equation}\notag
\begin{split}
Y_4^3&=Y_1^3+Y_2^3+Y_3^3;\\
Y_5^3&=Y_0^3+b_1Y_1^3+b_2Y_2^3+b_3Y_3^3
\end{split}
\end{equation}
where $[Y_0:\cdots:Y_5]$ are the homogeneous coordinates on $\P^5$. The quotient morphism from $Y_a$ to $X_a$ is \begin{equation}\notag
\begin{split}
\pi:Y_a&\rightarrow X_a\\
[Y_0:\cdots:Y_5]&\mapsto [Y_0^3:Y_1^3:Y_2^3:Y_3^3].
\end{split}
\end{equation}
The finite abelian group $N_1=ker(\sum_{i=0}^5\Z/3\Z\xrightarrow{\sum}\Z/3\Z)$ acts on $Y_a$ by the following way:
\begin{equation}\notag
\begin{split}
N_1\times Y_a&\rightarrow Y_a\\
((a_0,\cdots, a_5),[Y_0:\cdots:Y_5])&\mapsto [\omega^{a_0}Y_0:\cdots:\omega^{a_5}Y_5]
\end{split}
\end{equation}
where $\omega$ is a primitive cubic root of unity.

It is easy to see the singular subset of $Y_a$ is $\pi^{-1}([1:0:0:0])=\{[1:0:0:0:0:\omega^i]| i=0,1,2\}$.
Blowing up $Y_a$ along these singular points, we get a smooth projective variety $\tilde{Y}_a$, and we can see the exceptional divisor $E$ on $\tilde{Y}_a$ is a disjoint union of smooth cubic surfaces.

Now we take a sufficient small open neighborhood $V$ of $[1:0:0:0]$ in $\P^3$, such that $V$ is biholomorphic to an open ball. Let $U_1$ be the inverse image of $V$ in $\tilde{Y}_a$, and let $U_2=\tilde{Y}_a\backslash E$. Then $\tilde{Y}_a=U_1\cup U_2$ and we get the following exact sequence from the Meyer-Vietoris sequence:
$$
H^2(U_1\cap U_2, \ \Q)^{N_1}\rightarrow H^3(\tilde{Y}_a, \ \Q)^{N_1}\rightarrow H^3(U_1, \ \Q)^{N_1}\oplus H^3(U_2, \ \Q)^{N_1}\rightarrow H^3(U_1\cap U_2, \ \Q)^{N_1}.
$$

It is not difficult to see that the exceptional divisor  $E$ is a deformation retract of $U_1$. Since $E$ is a disjoint union of smooth cubic surfaces, we get $H^3(U_1, \ \Q)\simeq H^3(E, \ \Q)=0$. On the other hand, we can see $U_2/N_1\simeq X_a\backslash \{p\}$, where $p$ is the inverse image of $[1:0:0:0]\in \P^3$. This implies the isomorphism  $H^3(U_2, \ \Q)^{N_1}\simeq H^{3}(X_a\backslash \{p\}, \ \Q)$.

Next we consider $H^2(U_1\cap U_2, \ \Q)^{N_1}$ and $H^3(U_1\cap U_2, \ \Q)^{N_1}$. Let $Z$ by the hypersurface in $\C^4$ defined by the equation $x_4^3=x_1x_2x_3(x_1+x_2+x_3)$, then we can see the quotient space $U_1\cap U_2/N_1$ is homotopic to $Z\backslash \{(0,0,0,0)\}$. Since a direct computation shows that both the cohomology groups $H^2(Z\backslash \{(0,0,0,0)\}, \ \Q)$ and $H^3(Z\backslash \{(0,0,0,0)\}, \ \Q)$ are vanishing, we see $H^2(U_1\cap U_2, \ \Q)^{N_1}=H^3(U_1\cap U_2, \ \Q)^{N_1}=0$. Moreover, a similar Meyer-Vietoris sequence analysis on $X_a$ shows that $H^3(X_a\backslash \{p\}, \ \Q)\simeq H^3(X_a, \ \Q)$.

Putting the above analysis together, we conclude $H^3(X_a, \ \Q)\simeq H^3(\tilde{Y}_a, \ \Q)^{N_1}$. Since $\tilde{Y}_a$ is a smooth projective variety, we see the mixed Hodge structure on $H^3(X_a, \ \Q)$  is pure.

\end{proof}

For a stable hyperplane arrangement $a=(H_1,\cdots, H_6)\in \smx$, the divisor $\sum_{i=1}^6 H_i$ has at worst four-fold intersection point, and the number of four-fold intersection points is less than or equal to $3$. Furthermore, if the number of four-fold intersection points is $k$, then $dim_{\Q}H^3(X_a, \Q)=8-2k$, and the $(H^3(X_a, \Z[\omega])_{\Z[\omega]}, h)$ is a rank $4-k$ lattice with signature $(3-k,1)$. All these facts can be checked by a careful degeneration analysis as that in  the proof of Proposition \ref{prop:stable CY has pure Hodge structure}. Since it is routine and a little tedious, we omit it here.

Now we can extend Theorem \ref{thm:global Torelli} to  stable hyperplane arrangements.
\begin{theorem}\label{thm:global Torelli:stable case}
 Suppose $a=(H_1,\cdots, H_6)$ and $b=(H^{'}_1,\cdots,H^{'}_6)$ are two stable hyperplane arrangements in $\P^3$. Let $X_a$(resp. $X_b$) be the cyclic triple cover of $\P^3$ branched along $a$ (resp. $b$). Then the polarized $\Z$-Hodge structures $H^3(X_a, \ \Z)$ and $H^3(X_b, \ \Z)$ are isomorphic if and only if after a permutation, the hyperplane arrangements $\{H_1,\cdots, H_6\}$ and $\{H^{'}_1,\cdots,H^{'}_6\}$ are projectively equivalent.
\end{theorem}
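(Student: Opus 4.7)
The ``if'' direction is immediate: a permutation together with a projective linear map carrying $a$ to $b$ lifts to an isomorphism $X_a\xrightarrow{\sim} X_b$, and hence induces an isomorphism of polarized $\Z$-Hodge structures on $H^3$. My plan for the ``only if'' direction is to combine the already-established isomorphism $P_X:\tsmx\xrightarrow{\sim}\B_3$ with the purity statement (Proposition \ref{prop:stable CY has pure Hodge structure}) so as to upgrade Proposition \ref{prop:period iso: CY case} across the boundary.

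First, I would extend the period bijection to coarse moduli in the stable setting. The $S_6$-action on $\mx$ extends to $\smx$ and lifts through the Fox completion to $\tsmx$; under the identification $\phi_{ass}$ of Proposition \ref{prop:commutative dia for period mappings} this action matches the quotient $PAut(\Lambda,h)/P\Gamma_{\theta}\simeq S_6$ from diagram (\ref{diagram:some groups}). The very argument of Proposition \ref{prop:period iso: CY case} then yields a bijection
$$
\bar{P}_X : \smx/S_6 \xrightarrow{\sim} \B_3/\Aut(\Lambda,h),
$$
extending the one available for arrangements in general position. The task is then reduced to showing that isomorphic polarized Hodge structures force $\bar{P}_X(a)=\bar{P}_X(b)$; the theorem then follows by the implicit Galois descent.

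Next, given an isomorphism $\phi: H^3(X_a,\Z)\xrightarrow{\sim} H^3(X_b,\Z)$ of polarized $\Z$-Hodge structures, I would argue that $\phi$ is automatically compatible with the deck transformations $\sigma_a^*$ and $\sigma_b^*$. Purity (Proposition \ref{prop:stable CY has pure Hodge structure}) together with the analogue of the decomposition (\ref{equation:decomposition}) shows that $H^3(X,\C)_{\omega}$ is characterised among the two eigen-summands as the one containing the Hodge line $H^{3,0}(X)$, and the sign of the Hermitian form $-iQ(\cdot,\overline{\cdot})$ on $H^{3,0}$ makes this identification intrinsic to the polarization. Hence $\phi_\C$ carries $\omega$-eigenspaces to $\omega$-eigenspaces and restricts to an isometry of Hermitian $\Z[\omega]$-modules $H^3(X_a,\Z[\omega])_{\omega}\simeq H^3(X_b,\Z[\omega])_{\omega}$ preserving the one-dimensional subspace $H^{3,0}$.

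The main obstacle is that for genuinely boundary arrangements these modules have rank strictly less than four, so the isometry supplied by $\phi$ does not \emph{a priori} come from an element of $\Aut(\Lambda,h)$. To bridge this, I would use the boundary description established just before Proposition \ref{prop:stable CY has pure Hodge structure}: if $\tilde a,\tilde b\in \tsmx$ are lifts of $a,b$, then $P_X(\tilde a)$ and $P_X(\tilde b)$ lie in the common zero locus of a configuration of short roots $r_1,\dots,r_k$ tracking the four-fold intersection points of the underlying arrangements, and a degeneration analysis parallel to the one in Proposition \ref{prop:stable CY has pure Hodge structure} identifies $H^3(X_a,\Z[\omega])_{\omega}$ with the sublattice $\Lambda\cap\langle r_1,\dots,r_k\rangle^{\perp}$ (and similarly for $b$). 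A Witt-type extension for Hermitian $\Z[\omega]$-lattices then promotes $\phi$ to an element of $\Aut(\Lambda,h)$ sending $P_X(\tilde a)$ to $P_X(\tilde b)$, which places $a$ and $b$ in the same $\Aut(\Lambda,h)$-orbit on $\B_3$ and, via $\bar{P}_X$, in the same $S_6$-orbit on $\smx$. The technical heart is this Witt-type extension, together with the explicit degeneration matching between short-root hyperplanes of $(\Lambda,h)$ and the four-fold incidences defining the stable boundary divisors of $\smx$.
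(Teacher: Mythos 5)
Your proposal is correct in outline and follows the same overall strategy as the paper: reduce the statement to the extended period bijection $\smx/S_6\xrightarrow{\sim}\B_3/\Aut(\Lambda,h)$ of Proposition \ref{prop:period iso: CY case} (which the paper already states for the stable locus, so your first step is essentially a restatement), use purity and the eigenspace decomposition to make the Hodge isometry $\Z[\omega]$-linear, and then match boundary period points via the identification of boundary strata with short-root hyperplanes $H_r$. Where you differ is in how the boundary cases are organized. The paper argues case by case on the number $k\in\{0,1,2,3\}$ of four-fold points: for $k=1,2$ it identifies $P_X$ restricted to a boundary stratum with the period map into $\B_2^{r_1}\cap\cdots\cap\B_2^{r_k}$ and then asserts rather tersely that an isomorphism of the (rank $4-k$) Hodge structures forces the period points to agree modulo the arithmetic group; for $k=3$ it abandons Hodge theory entirely and uses an elementary configuration argument. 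You instead propose a uniform lattice-theoretic treatment, isolating as the ``technical heart'' a Witt-type extension of an isometry of $\Lambda^{r_1,\dots,r_k}$ to an element of $\Aut(\Lambda,h)$. This is exactly the step the paper glosses over, and it is worth noting that it is in fact easy here: since $h(r,r)=1$ is a unit, $\Lambda$ splits orthogonally as $\Z[\omega]\langle r_1,\dots,r_k\rangle\oplus\Lambda^{r_1,\dots,r_k}$, so one extends by the identity on the root span. Two points you should make explicit to complete the argument: (i) before extending, one must move the root configuration of $\tilde b$ onto that of $\tilde a$ by an element of $\Aut(\Lambda,h)$, i.e.\ one needs transitivity of $\Aut(\Lambda,h)$ on $k$-tuples of mutually perpendicular short roots (for $k=1$ this is the irreducibility of $\mathcal{H}/\Aut(\Lambda,h)$ that the paper invokes; for $k=2,3$ it needs a separate check); and (ii) your uniform argument must still make sense when $k=3$, where the residual lattice has rank one and signature $(0,1)$, so the ``period point'' carries no information and everything rests on the root-configuration matching --- which is why the paper switches to a direct configuration argument there. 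With these two points supplied, your route is complete and arguably cleaner than the paper's.
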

\begin{proof}
Suppose $dim_{\Q}H^3(X_a, \Q)=dim_{\Q}H^3(X_b, \Q)=8-2k$, with $k=0,1,2,3$. Then we discuss the four cases according to $k$.
\begin{enumerate}
\item[(1)] $k=0$: In this case, both $a$ and $b$ are in general position, so it follows from Theorem \ref{thm:global Torelli}.
\item[(2)] $k=1$:  Since $\smx\backslash \mx \simeq \mathcal{H}/Aut(\Lambda, h)$ is irreducible, we can choose an irreducible component $H$ of $\tsmx\setminus \tmx$ and $\tilde{a}$, $\tilde{b}$ in $H$ over $a$ and $b$ respectively. Suppose the isomorphism  $P_X: \tsmx\xrightarrow{\sim} \B_3$ maps $H$ to  $\B_2^r:=\{[v]\in \P(\Lambda^r\otimes_{\Z[\omega]}\C)| h(v,v)<0\}$. Here $r\in R$ is a short root, and  $\Lambda^r:= \{\alpha\in \Lambda| h(\alpha, r)=0\}$ is a free $\Z[\omega]$-module of rank three.  we can see the isomorphism $P_X: H\xrightarrow{\sim} \B_2^r$ coincides with the period map $H\rightarrow \B_2^r$ which associates with a point  $c$ in the smooth locus of $H$ the line $[H^{3,0}(X_c, \ \C)]$ in $\P(H^3(X_c, \Z[\omega])_{\omega}\otimes_{\Z[\omega]}\C)$, where $X_c$ is the cyclic triple cover of $\P^3$ branched along $c$. Then since the polarized $\Z$-Hodge structures $H^3(X_a, \ \Z)$ and $H^3(X_b, \ \Z)$ are isomorphic, we get $P_X(\tilde{a})=P_X(\tilde{b})$. By Proposition \ref{prop:period iso: CY case}, this in turn implies $a$ and $b$ has the same image in $\smx/S_6$, so after a permutation, the hyperplane arrangements $a$ and $b$ are projectively equivalent.
\item[(3)] $k=2$: In this case, we can choose two irreducible components $H_1,H_2$ of $\tsmx\setminus \tmx$ and points $\tilde{a}$, $\tilde{b}$ in $H_1\cap H_2$ over $a$ and $b$ respectively. Moreover, if the isomorphism  $P_X: \tsmx\xrightarrow{\sim} \B_3$ maps $H_i$ to $\B_2^{r_i}$, $i=1,2$, then $r_1$ and $r_2$ are two perpendicular short roots. Similar as the $k=1$ case, the restriction $P_X: H_1\cap H_2\xrightarrow{\sim} \B_2^{r_1}\cap \B_2^{r_2}$ coincides with the period map by associating a point  $c$ in the smooth locus of $H_1\cap H_2$ the line $[H^{3,0}(X_c, \ \C)]$ in $\P(H^3(X_c, \Z[\omega])_{\omega}\otimes_{\Z[\omega]}\C)$. Then this case follows from the same argument as in the $k=1$ case.
\item[(4)] $k=3$: In this case, both $a$ and $b$ have three fourfold intersection points, then an elementary analysis on the configuration of hyperplane arrangements shows that after a permutation, the hyperplane arrangements $a$ and $b$ are projectively equivalent.
\end{enumerate}
\end{proof}

\appendix
  \renewcommand{\appendixname}{Appendix ~\Alph{section}}
 \section{Comparison with Rohde's example}\label{sec:comparison with Rohde's example}

Given distinct $a,b,c\in \C\backslash \{0,1\}$, Rohde \cite{Rohde} constructed a singular Calabi-Yau threefold $X^{'}$ in the following way:

Let $W$ be the surface in the weighted projective space $\P(2,2,1,1)$ defined by the equation $y_1^3+y_2^3+x_0x_1(x_1-x_0)(x_1-ax_0)(x_1-bx_0)(x_1-cx_0)=0$. Let $F$ be the Fermat curve in $\P^2$ defined by the homogeneous equation $z_0^3+z_1^3+z_2^2=0$. Then the cyclic group $G=\Z/3\Z$ acts on $W$ and $F$. Fixing a generator $\sigma$ of $G$ and let $\omega$ be a fixed primitive cubic root of unity. We define these actions explicitly:
\begin{equation}\notag
\begin{split}
\sigma: W&\rightarrow W \\
 [x_0:x_1:y_1:y_2]&\mapsto [x_0:x_1:\omega y_1:\omega y_2]\\
 \sigma: &F\rightarrow F\\
 [z_0:z_1:z_2]&\mapsto [\omega z_0:z_1:z_2]
\end{split}
\end{equation}

Rohde \cite{Rohde} constructs the Calabi-Yau threefold as a crepant resolution of  the quotient threefold $X^{'}:=W\times F/G$, where $G$  acts on $W\times F$ in the diagonal way. Moreover, varying the parameters $a, b,c$ in $\C\backslash \{0,1\}$, Rohde obtains a family of Calabi-Yau threefolds $\mathcal{X}^{'}\rightarrow \mc $, where we recall that $\mc $ is the moduli space of ordered six distinct points in $\P^1$. The main goal of this section is to show Rohde's family is birationally equivalent to the  family $\mathcal{X}_{AR}\rightarrow \mathfrak{M}_{3, 6}$, which  is the universal family of cyclic triple covers of $\P^3$ branched along six hyperplanes in general position.

We first analyze the structure of the singular surface $W$. In general, if $X_1$ and $X_2$ are two varieties with $G$-action, we say $X_1$ and $X_2$ are $G$-birationally equivalent  if there exists a birational map from $X_1$ to $X_2$ compatible with the $G$-action.

Let $C$ be the cyclic triple cover of $\P^1$ branched along the six points $\{0, 1, \infty, a, b, c\}$ whose affine model is the curve in $\C^2$ defined by the equation $y^3-x(x-1)(x-a)(x-b)(x-c)=0$. Then $G$ acts on $C$ in the following way:
\begin{equation}\notag
\begin{split}
\sigma: C &\rightarrow C\\
(x,y)& \mapsto  (x, \omega y)
\end{split}
\end{equation}

Let $G$ act on the product $C\times F$ diagonally, then $G$ acts on the quotient $C\times F/ G$ through the $G$-action on the  first factor $C$.
\begin{lemma}\label{lemma: W birational to CtimesF/G}
$W$ is $G$-birationally  equivalent to the quotient $C\times F/ G$.
\end{lemma}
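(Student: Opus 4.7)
The plan is to exhibit a $G$-equivariant isomorphism of function fields between $W$ and $C\times F/G$. First I dehomogenize: on the chart $\{x_0\neq 0\}\subset \P(2,2,1,1)$, using weighted affine coordinates $u=y_1/x_0^2$, $v=y_2/x_0^2$, $x=x_1/x_0$, the surface $W$ becomes the affine hypersurface $u^3+v^3+p(x)=0$ with $p(x):=x(x-1)(x-a)(x-b)(x-c)$, and $\sigma$ acts by $(u,v,x)\mapsto(\omega u,\omega v,x)$. Analogously I take the affine models $C:y^3=p(x)$, with $\sigma\cdot(x,y)=(x,\omega y)$, and (on $\{z_2\neq 0\}$, with $s=z_0/z_2$, $t=z_1/z_2$) $F:s^3+t^3+1=0$, with $\sigma\cdot(s,t)=(\omega s,t)$. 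I interpret the phrase ``$G$ acts on $C\times F/G$ through the $G$-action on $C$'' as follows: one quotients by the diagonal subgroup $\Delta G\subset G\times G$, and the residual $G\cong(G\times G)/\Delta G$ is represented by $G\times 1$.

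The key step is to write down explicit $\Delta G$-invariant rational functions on $C\times F$ that realize the defining equation of $W$. I take
\[
u := y/s, \qquad v := yt/s,
\]
together with $x$. Both $u$ and $v$ are manifestly invariant under $(y,s)\mapsto(\omega y,\omega s)$, so they descend to $C\times F/G$. Using $y^3=p(x)$ on $C$ and $1+t^3=-s^3$ on $F$, a direct computation gives
\[
u^3+v^3 \;=\; \frac{y^3(1+t^3)}{s^3} \;=\; -p(x),
\]
so $(x,u,v)$ satisfies precisely the affine equation of $W$. Moreover, the residual $G$-action, represented by $(\sigma,\id)$, sends $y\mapsto\omega y$ while fixing $x,s,t$, and hence transforms $(x,u,v)\mapsto(x,\omega u,\omega v)$; this matches the given $G$-action on $W$.

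What remains is to verify that the induced rational map $C\times F/G\dashrightarrow W$ is birational, i.e.\ that $\C(x,u,v)$ exhausts $\C(C\times F)^{\Delta G}$. Since $t=v/u\in\C(x,u,v)$ and $s^3=p(x)/u^3\in\C(x,u,v)$, one has $\C(x,y,s,t)=\C(x,u,v)(s)$ with $s$ satisfying a cubic equation over $\C(x,u,v)$, so this extension has degree at most $3$; on the other hand $\Delta G$ acts faithfully on $\C(x,y,s,t)$ (fixing $x,t$ and scaling $y,s$ simultaneously by $\omega$), forcing $[\C(x,y,s,t):\C(x,y,s,t)^{\Delta G}]=3$. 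Comparing degrees yields $\C(x,u,v)=\C(x,y,s,t)^{\Delta G}$, completing the proof. I do not foresee a serious obstacle---the entire argument is careful bookkeeping of invariants for a $\Z/3\Z$-action---but the only delicate point is disentangling the two copies of $G$ so that the residual action on $C\times F/G$ is correctly identified with the given $\sigma$-action on $W$.
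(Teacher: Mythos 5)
Your proof is correct and is essentially the paper's own argument: the paper uses the map $(x,y,z_1,z_2)\mapsto(x,z_1y,z_2y)$ on $C\times F_0$ with $F_0=\{1+z_1^3+z_2^3=0\}$, which is exactly your $(u,v)=(y/s,\,yt/s)$ written in the chart $z_0\neq 0$ instead of $z_2\neq 0$ (up to swapping $y_1\leftrightarrow y_2$). The only addition is your function-field degree count for birationality, which the paper dismisses as ``easy to see''; that verification is sound.
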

\begin{proof}
Let $F_0$ be the affine surface $\{(z_1,z_2)\in \C^2| 1+z_1^3+z_2^3=0\}$, and define a $G$-action on $F_0$ by $\sigma (z_1,z_2)=(\omega z_1,\omega z_2)$. Then $F_0$ is $G$-birational to the Fermat curve $F$. Define the following morphism:
\begin{equation}\notag
\begin{split}
C\times F_0&\rightarrow W \\
(x,y,z_1,z_2)&\mapsto (x, z_1 y, z_2 y)
\end{split}
\end{equation}
It is easy to see this morphism induces a $G$-birational equivalent between $C\times F_0/ G$ and $W$. So $W$ is $G$-birationally  equivalent to $C\times F/ G$.
\end{proof}

Now  we consider the following six hyperplanes in $\P^3$ which are in general position
$$
H_i: X_i=0 \ (0\leq i\leq 3), \ \ H_4: \sum_{i=0}^3X_i=0, \ \ H_5: X_0+a X_1+b X_2+c X_3=0,
$$
where $[X_0:\cdots:X_3]$ is the homogeneous coordinates on $\P^3$. Let $X$ be the cyclic triple cover of $\P^3$ branched along $\sum_{i=0}^5 H_i$.

In order to analyze the structure of $X$, we define some auxiliary  varieties.

Let $u_1$, $v_1$ be linear functions of $u$, $v$ defined by the following relations:
\begin{equation}\label{equ:u,v and u_1, v_1}
\left\{
               \begin{array}{ll}
                u_1 & =1+u+v \\
                v_1 & =a+bu+cv.

 \end{array}
             \right.
\end{equation}

We define $Y$ as the following affine variety:
$$
Y=\{(t_1,u,v,y_1)\in \C^4| y_1^3=\frac{uvt_1(t_1+1)}{u_1v_1(v_1-u_1)}\}.
$$

Let $S$ be the following affine surface:
 $$
 S=\{(w, u, v)\in \C^3| w^3=\frac{uv}{u_1v_1(v_1-u_1)}\}.
 $$
Let $G$ acts on $S$ by $\sigma(w, u, v)=(\omega^2 w, u, v)$.

\begin{lemma}\label{lemma:X birational to various objects}
We have the following birational isomorphsims:
\begin{itemize}
\item [(1)] $X$ is birationally equivalent to $Y$.
\item [(2)] $Y$ is birationally equivalent to $S\times F /G$, where $G$ acts on $S\times F$ diagonally.
\item [(3)] $S$ is $G$-birationally equivalent to $C\times F /G$.
\end{itemize}
\end{lemma}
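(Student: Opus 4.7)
The plan is to prove the three parts in the order (2), (3), (1), since (2) reduces to a direct factorization, (3) is an explicit $G$-invariant calculation on $C\times F_0$, and (1) is the subtlest and is most naturally motivated by having (2) and (3) in hand.

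For \textbf{part (2)}, the defining equation of $Y$ factors as
\[
y_1^3 \;=\; \Bigl(\frac{uv}{u_1 v_1(v_1-u_1)}\Bigr)\cdot t_1(t_1+1).
\]
The first factor is the defining equation of $S$ upon setting $w^3 = uv/(u_1 v_1(v_1-u_1))$, and the second factor defines a smooth affine genus-one curve $E:\ \tilde y^3 = t_1(t_1+1)$. The projective closure $\tilde Y^3 = T_0 T_1(T_0+T_1)$ in $\P^2$ is a smooth plane cubic admitting a $\Z/3\Z$-action $\tilde Y\mapsto \omega\tilde Y$ with three fixed points, hence $G$-equivariantly isomorphic to the Fermat cubic $F$. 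With the given actions $\sigma(w)=\omega^2 w$ on $S$ and $\sigma(\tilde y)=\omega\tilde y$ on $F$, the product $y_1:=w\tilde y$ is $G$-invariant, and together with the invariants $u,v,t_1$ it generates the $G$-invariant subfield $\C(S\times F)^G$. This subfield coincides with $\C(Y)$, yielding $Y\sim (S\times F)/G$.

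For \textbf{part (3)}, I parametrize $F_0:\ 1+z_1^3+z_2^3=0$ by the $G$-invariant rational coordinate $t=z_2/z_1$, giving $z_1^3=-1/(1+t^3)$. On $C\times F_0$ the monomial $\tilde w := yz_1^2$ is $G$-invariant (one checks $\sigma(yz_1^2)=\omega\cdot\omega^2\cdot yz_1^2 = yz_1^2$) and satisfies
\[
\tilde w^3 \;=\; \frac{x(x-1)(x-a)(x-b)(x-c)}{(1+t^3)^2}.
\]
Hence $\C(C\times F_0)^G = \C(x,t,\tilde w)$ with this cube relation. To identify this field with $\C(S)=\C(u,v,w)$, I exhibit an explicit birational map $\psi:\P^1_x\times\P^1_t\dashrightarrow \P^2_{(u,v)}$ carrying the five vertical branch lines $\{x=0,1,a,b,c\}$ and the three horizontal branch lines $\{1+\omega^k t=0\}$ onto the six branch lines of the cover $S\to\P^2$ (namely $\{u=0,v=0,u_1=0,v_1=0,v_1-u_1=0\}$ together with the line at infinity). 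Matching the cube relations pins down $\psi$ up to a cube factor. The residual $G$-action on $(C\times F_0)/G$ induced from the $C$-factor acts by $\sigma(\tilde w)=\omega\tilde w$; matching with $\sigma(w)=\omega^2 w$ via the twist $w\leftrightarrow \tilde w^{-1}$ (modulo cubes) establishes $G$-equivariance.

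For \textbf{part (1)}, I would construct a birational map between $X$ (in the affine chart $X_0=1$ with equation $w^3=x_1 x_2 x_3 L_4 L_5$, $L_4=1+x_1+x_2+x_3$, $L_5=1+ax_1+bx_2+cx_3$) and $Y$ by an explicit substitution of coordinates. The six linear factors define six hyperplanes in general position in $\P^3$, while the branch divisor of $Y$ displays two visible pencils of three hyperplanes each ($\{t_1=0,t_1+1=0,T_0=0\}$ and $\{u_1=0,v_1=0,v_1-u_1=0\}$). After a projective change of variables on $\P^3$ no three of the original six hyperplanes become concurrent in a line, so the birational map must be non-linear; I will introduce $t_1,u,v$ as suitable rational functions of $x_1,x_2,x_3$ (essentially ratios of products of the six linear forms) and verify that, after clearing, the two cube relations agree modulo a cube in $\C(x_1,x_2,x_3)^\times$. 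The \textbf{main obstacle} is precisely the explicit construction of this substitution and the analogous map $\psi$ in part (3): once the birational identifications of branch configurations are in hand, the verification that the cube relations match is a mechanical computation.
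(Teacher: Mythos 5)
Your part (2) is correct and coincides with the paper's argument: the paper takes the affine model $F_1=\{x^3=t_1(t_1+1)\}$ of the Fermat cubic and the map $(w,u,v,t_1,x)\mapsto(t_1,u,v,wx)$, which is exactly your $y_1=w\tilde y$ together with the identification of the $G$-invariant subfield.

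Parts (1) and (3), however, are plans rather than proofs, and the step you set aside as ``the main obstacle'' is precisely the entire content of the paper's argument. For (1) no Cremona map guessed from branch configurations is needed: the paper projects from $[1:0:0:0]=H_1\cap H_2\cap H_3$, i.e.\ substitutes $x_1=t$, $x_2=tu$, $x_3=tv$ into $y^3=x_1x_2x_3(1+x_1+x_2+x_3)(1+ax_1+bx_2+cx_3)$; this extracts the cube $t^3$ from $x_1x_2x_3$, leaving $y^3=t^3uv(1+u_1t)(1+v_1t)$, and a M\"obius change of the fibre coordinate $t$ then turns $(1+u_1t)(1+v_1t)$ into $t_1(t_1+1)$ times $\frac{1}{u_1v_1(v_1-u_1)}$ times an explicit cube. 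You would have to produce this (or an equivalent) substitution; asserting that suitable ratios of the six linear forms will work is not a verification. For (3) the paper likewise argues by explicit coordinate changes on $S$ itself (pass to $(w,u_1,v_1)$, then to the pencil coordinates $u_1=t_2$, $v_1=t_2z$, then a M\"obius change in $t_2$), directly exhibiting $\C(S)$ as the invariant field of $C_1\times F_1$ with $C_1$ the triple cover of $\P^1_z$ branched at $\{0,1,a,b,c,\infty\}$. Your alternative route, computing $\C(C\times F_0)^G=\C(x,t,\tilde w)$ and then seeking $\psi\colon\P^1_x\times\P^1_t\dashrightarrow\P^2_{(u,v)}$ matching branch configurations, is not carried out, and as stated it starts from a miscount: the vertical line $x=\infty$ also lies in the branch locus (the quintic $x(x-1)(x-a)(x-b)(x-c)$ has a pole of order $5\not\equiv 0\pmod 3$ there, consistent with $C$ being branched at six points), so you must match \emph{nine} lines on $\P^1\times\P^1$ against six lines on $\P^2$, modulo cubes and contracted curves; such a $\psi$ is not ``pinned down by matching the cube relations'' --- its existence is what has to be proved. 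In sum, the proposal establishes (2) but leaves (1) and (3) essentially open.
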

\begin{proof}
(1) We take the following affine model of $X$:
$$
X_1=\{(x_1,x_2,x_3,y)\in \C^4| y^3=x_1x_2x_3(1+x_1+x_2+x_3)(1+ax_1+bx_2+cx_3)\}.
$$
Under the coordinate transformation
$$
\left\{
               \begin{array}{ll}
                x_1 & =t \\
                x_2 & =tu\\
                x_3 & =tv\\
                y   & =y
 \end{array}
             \right.
$$
the hypersurface $X_1$ is birationally equivalent to the following hypersurface in $\C^4$:
$$
X_2=\{(t,u,v,y)\in \C^4| y^3=t^3uv(1+u_1t)(1+v_1t)\},
$$
where $u_1$, $v_1$ are defined by the equations (\ref{equ:u,v and u_1, v_1}).
Then we can see $X_2$ is birational to $Y$ under the following coordinate transformation:
$$
\left\{
               \begin{array}{ll}
                u & =u \\
                v & =v\\
                t & =(\frac{1}{v_1}-\frac{1}{u_1})t_1-\frac{1}{u_1}\\
                y   & =tu_1v_1(\frac{1}{u_1}-\frac{1}{v_1})y_1.
 \end{array}
             \right.
$$

(2) It is direct to see the following affine curve $F_1$ is $G$-birationally equivalent to the Fermat curve $F$:
$$
F_1=\{(t_1,x)\in \C^2| x^3=t_1(t_1+1)\},
$$
where $G$ acts on $F_1$ by $\sigma (t_1, x)=(t_1, \omega x)$.

Then the following rational map induces the desired $G$-birationally equivalence between $S\times F /G$ and $Y$:
\begin{equation}\notag
\begin{split}
S\times F_1&\rightarrow Y \\
(w, u, v, t_1, x)&\mapsto (t_1, u, v, wx).
\end{split}
\end{equation}

(3) From the equations (\ref{equ:u,v and u_1, v_1}), we can view $(w, u_1, v_1)$ as coordinate system on $\C^3$, and we make the following coordinate transformation:
$$
\left\{
               \begin{array}{ll}
                w & =w \\
                u_1 & =t_2\\
                v_1 & =t_2z.
 \end{array}
             \right.
$$
Under this  coordinate transformation, we see $S$ is $G$-birationally equivalent to the following surface:
$$
S_1=\{(w,t_2,z)\in \C^3| w^3=\frac{(A_1t_2+a_1)(B_1t_2+b_1)}{t_2^3z(z-1)}\},
$$
where
$$
\left\{
               \begin{array}{ll}
               A_1 & =\frac{b-z}{b-c} \\
                B_1 & =\frac{c-z}{c-b}\\
                a_1 & =\frac{a-b}{b-c}\\
                b_1 & =\frac{a-c}{c-b},
 \end{array}
             \right.
$$
and $G$ acts on $S_1$ by $\sigma (w, t_2, z)=(\omega^2 w, t_2, z)$.

A further coordinate transformation:
$$
\left\{
               \begin{array}{ll}
               z&= z\\
                t_3 & =\frac{t_2+\frac{a_1}{A_1}}{\frac{b_1}{B_1}-\frac{a_1}{A_1}} \\
               w_1 & =\frac{wt_2}{A_1B_1(\frac{b_1}{B_1}-\frac{a_1}{A_1})}
               \end{array}
             \right.
$$
shows  that $S_1$ is $G$-birational to the  surface:
$$
S_2=\{(z,t_3,w_1)\in \C^3|w_1^3=\frac{t_3(t_3+1)}{z(z-1)A_1B_1(A_1b_1-B_1a_1)} \},
$$
where $G$ acts on $S_2$ by $\sigma (z, t_3, w_1)=(z, t_3, \omega^2 w_1)$.

Now let $C_1$ be  the following affine curve:
$$
C_1:=\{(z,w_2)\in\C^2| w_2^3=z(z-1)A_1B_1(A_1b_1-B_1a_1)\},
$$
and let $G$ act on $C_1$ by $\sigma (z, w_2)=(z, \omega^2 w_2)$.
We consider the affine model
$$
F_1=\{(t_3,x)\in \C^2| x^3=t_3(t_3+1)\}
$$
of the Fermat curve $F$ as before.
The rational map
\begin{equation}\notag
\begin{split}
C_1\times F_1&\rightarrow S_2 \\
(z, w_2, t_3, x)&\mapsto (z, t_3, \frac{x}{w_2}).
\end{split}
\end{equation}
gives a $G$-birationally equivalence between $C_1\times F_1 /G$ and $S_2$.
Moreover, we see the smooth projective model of $C_1$ is  isomorphic to  $C$,  the cyclic triple cover of $\P^1$ branched along the   six points
$\{0,1,\infty, a, b, c\}$.  By combining all of the birational equivalences above,  we  obtain $S$ is $G$-birationally equivalent to $C\times F/ G$.
\end{proof}

Now it is direct to see that, by combining Lemma \ref{lemma: W birational to CtimesF/G} and Lemma \ref{lemma:X birational to various objects}, we obtain the following birational equivalence.
\begin{proposition}\label{prop:Rohde's CY birational to our CY}
Given distinct  $a,b,c\in \C\backslash \{0,1\}$,
Rohde's singular Calabi-Yau threefold $X^{'}=W\times F /G$ is birational to $X$, which is the cyclic triple cover of $\P^3$ branched along $\sum_{i=0}^5 H_i$, with $H_i$ defined by
$$
H_i: X_i=0 \ (0\leq i\leq 3), \ \ H_4: \sum_{i=0}^3X_i=0, \ \ H_5: X_0+a X_1+b X_2+c X_3=0.
$$
\end{proposition}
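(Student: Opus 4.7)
The plan is simply to chain together the three birational equivalences supplied by Lemma \ref{lemma: W birational to CtimesF/G} and Lemma \ref{lemma:X birational to various objects}, being careful to keep track of the $G$-actions so that passing to quotients remains legitimate at each step.

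First I would start from Rohde's threefold $X^{'} = W \times F / G$, where $G$ acts diagonally. By Lemma \ref{lemma: W birational to CtimesF/G}, there is a $G$-birational map $W \dashrightarrow (C \times F)/G$, where $G$ acts on the right-hand side through its action on the first factor $C$. Taking the product with another copy of $F$ (with the diagonal $G$-action) and passing to the quotient by $G$, this yields
\begin{equation*}
X^{'} \;=\; W \times F / G \;\dashrightarrow\; \bigl((C \times F)/G\bigr) \times F \big/ G.
\end{equation*}

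Next I would invoke part (3) of Lemma \ref{lemma:X birational to various objects}, which states that the surface $S$ is $G$-birationally equivalent to $(C \times F)/G$. Substituting $S$ for $(C \times F)/G$ in the previous expression produces a birational equivalence
\begin{equation*}
\bigl((C \times F)/G\bigr) \times F \big/ G \;\dashrightarrow\; (S \times F)/G.
\end{equation*}
Then part (2) of the same lemma gives $(S \times F)/G \dashrightarrow Y$, and part (1) gives $Y \dashrightarrow X$. Concatenating these maps yields $X^{'} \dashrightarrow X$.

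The only subtle point (and arguably the main obstacle to writing this cleanly) is that several of the intermediate spaces are themselves $G$-quotients, so one has to verify that the $G$-birational maps at each stage are compatible with the further diagonal $G$-actions that appear once we take a product with $F$ and quotient again. This is not a computational difficulty but a bookkeeping one: one traces through the explicit formulas in the proofs of Lemma \ref{lemma: W birational to CtimesF/G} and Lemma \ref{lemma:X birational to various objects}, observes that the generator $\sigma$ acts by the same power of $\omega$ on all factors being identified, and concludes that the induced maps on quotients are well defined and birational. Once this is checked, no further computation is needed, and the proposition follows by composing the chain of birational maps displayed above.
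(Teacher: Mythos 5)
Your proposal is correct and is essentially the paper's own argument: the paper simply states that the proposition follows "by combining Lemma \ref{lemma: W birational to CtimesF/G} and Lemma \ref{lemma:X birational to various objects}", and the chain $X^{'}=W\times F/G \dashrightarrow ((C\times F)/G)\times F/G \dashrightarrow (S\times F)/G \dashrightarrow Y \dashrightarrow X$ you write out is exactly the intended combination. Your attention to the compatibility of the $G$-actions when passing to quotients is the right (and only) point needing care, and it checks out.
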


Note that to give  distinct $a,b,c\in \C\backslash \{0,1\}$ is   equivalent to give six distinct points $\{0, 1, \infty, a, b, c\}$ in $\P^1$. Since the moduli space $\mc $ of ordered six distinct points in $\P^1$ is isomorphic to the moduli space  $\mathfrak{M}_{3, 6}$ of ordered six hyperplane arrangements in general in $\P^3$, the following birational equivalence is a direct consequence of Proposition \ref{prop:Rohde's CY birational to our CY}.
\begin{proposition}\label{prop:family version:Rohde's CY birational to our CY}
Rohde's Calabi-Yau  family $\mathcal{X}^{'}\rightarrow \mc $ is birationally equivalent to the
 universal family $\mathcal{X}_{AR}\rightarrow \mathfrak{M}_{3, 6}$ of cyclic triple covers of $\P^3$ branched along six hyperplanes in general position.
\end{proposition}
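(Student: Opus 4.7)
The plan is to upgrade the pointwise birational equivalence of Proposition \ref{prop:Rohde's CY birational to our CY} to a relative birational equivalence over the base $\mc \simeq \mathfrak{M}_{3,6}$. The crucial point is that every coordinate change and rational map appearing in Lemma \ref{lemma: W birational to CtimesF/G} and Lemma \ref{lemma:X birational to various objects} is written with coefficients that are rational functions in the parameters $a,b,c\in \C\setminus\{0,1\}$ (none of them involve, for instance, extraction of roots from the parameters). Hence they make sense fiberwise in a flat family over the parameter space and assemble to birational maps of the total spaces.

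First I would fix the identification $\mc \simeq \mathfrak{M}_{3,6}$ from Section \ref{sec:Families from hyperplane arrangements}, implemented concretely by sending the ordered tuple $(0,1,\infty,a,b,c)\in\mc$ to the ordered hyperplane arrangement $(H_0,\ldots,H_5)$ with
\[
H_i:\ X_i=0\ (0\le i\le 3),\quad H_4:\ \textstyle\sum_{i=0}^3 X_i=0,\quad H_5:\ X_0+aX_1+bX_2+cX_3=0.
\]
Next I would realize both families on a common Zariski-open subset $U\subset \C^3$ (the locus where $a,b,c$ are distinct and in $\C\setminus\{0,1\}$) and then descend, since the universal family over $\mc$ (resp.\ $\mathfrak{M}_{3,6}$) is obtained by dividing by $\mathrm{PGL}(2)$ (resp.\ $\mathrm{PGL}(4)$) acting on the section provided by the normal form above. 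Over $U$, the parameter-dependent versions of $W$, $C$, $F$, $Y$, $S$, $S_1$, $S_2$, $C_1$, $F_1$ are all naturally total spaces of flat families, and the $G$-action is constant in the parameters, so the quotients form families as well.

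The core step is then to verify, mechanically, that each rational map written down in the proofs of the two lemmas is a birational equivalence of the total spaces over $U$. For Lemma \ref{lemma: W birational to CtimesF/G} this is clear because the map $C\times F_0 \to W$, $(x,y,z_1,z_2)\mapsto (x,z_1y,z_2y)$, is literally parameter-free. For Lemma \ref{lemma:X birational to various objects} I would trace through the three successive coordinate substitutions in (1), (2), (3) and note that their defining formulas are rational in $a,b,c$ with denominators nonvanishing on the stable locus, in particular on the open part of $U$ where $a\neq b\neq c$; hence they define birational maps between the corresponding total spaces over this open subset. Composing yields a relative birational equivalence $\mathcal{X}^{'}|_U \dashrightarrow \mathcal{X}_{AR}|_U$ commuting with the projections.

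The only subtle point, which I would treat as the main obstacle, is descent from $U$ to the moduli spaces: the birational map over $U$ must be equivariant for the $\mathrm{PGL}(2)$ action on $\mc$'s pre-moduli space (equivalently $\mathrm{PGL}(4)$ on $\mathfrak{M}_{3,6}$'s side) in order to descend. Because both families are constructed canonically from their bases, equivariance can be checked by showing that the fiberwise birational maps agree with those induced by any $\mathrm{PGL}(2)$-translation of the six points; this in turn follows from the naturality of the classical identification $\P^3 \simeq \mathrm{Sym}^3\P^1$ (recorded in Section \ref{sec:Families from hyperplane arrangements}) under which all the constructions in Proposition \ref{prop:Rohde's CY birational to our CY} were obtained. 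Once equivariance is in hand, the birational map descends to $\mc\simeq\mathfrak{M}_{3,6}$, completing the proof.
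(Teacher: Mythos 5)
Your proposal is correct and follows essentially the same route as the paper, which simply observes that the fiberwise birational maps of Proposition \ref{prop:Rohde's CY birational to our CY} depend rationally on $(a,b,c)$ and therefore globalize over $\mc\simeq\mathfrak{M}_{3,6}$; you merely spell out the verification the paper leaves implicit. The descent step you single out as the main obstacle is in fact moot here, since both moduli spaces are already realized in the paper as normalized parameter slices (first three points at $0,1,\infty$, respectively the first five hyperplanes in the standard normal form), so the family over $U$ \emph{is} the universal family and no further $\mathrm{PGL}$-equivariance needs to be checked.
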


\begin{thebibliography}{X-X00}

\bibitem{ACT}
 Allcock, D.,  J.A. Carlson  and   D. Toledo: {\it The complex hyperbolic geometry of the moduli space of cubic surfaces}, J. Algebraic
Geom.  \textbf{11}(4),  659-724 (2002).


\bibitem{Andreotti}
Andreotti, A.: {\it On a theorem of Torelli}, Am. J. Math. \textbf{80}, 801-828 (1958).

\bibitem{Burns-Rapoport}
Burns, D. and  M. Rapoport: {\it On the Torelli problem for K\"ahlerian K3 surfaces}, Ann. Sc. ENS. \textbf{8}, 235-274 (1975).


\bibitem{DM}
Deligne, P. and G.D. Mostow: {\it Monodromy of hypergeometric functions
and non-lattice integral monodromy}, Publ. Math. Inst. Hautes \'{E}tudes Sci. \textbf{63}, 5-89 (1986).


\bibitem{DO}
Dolgachev, I. and D. Ortland, {\it Point sets in projective spaces and
theta functions}, Ast\'{e}risque \textbf{165} (1988).

\bibitem{Laza}
Laza, R.: {\it The moduli space of cubic fourfolds via the period map}, Ann. of Math. \textbf{172}(2), 673-711 (2010).

\bibitem{Looijenga}
Looijenga, E.: {\it The period map for cubic fourfolds}, Invent. Math. \textbf{177}, 213-233 (2009).




\bibitem{Looijenga-Peters}
Looijenga, E. and  C. Peters: {\it Torelli theorems for K\"ahler K3-surfaces}, Compos. Math. \textbf{42}, 145-186 (1981).

\bibitem{Piateckii-Shapiro-Safarevi}
Piate\v{c}kii-Shapiro, I. I. and I. R. \v{S}afarevi\v{c}: {\it A Torelli
theorem for algebraic surfaces of type K3}, Math. USSR, Izv. \textbf{5}, 547-588 (1971).

\bibitem{Rohde}
Rohde, J.:  {\it Cyclic coverings, Calabi-Yau manifolds and complex multiplication},  Lecture Notes in
Math.,  vol. \textbf{1975},  Springer, Berlin (2009).


\bibitem{SXZ}
Sheng, M.,  J. Xu and  K. Zuo:  {\it Maximal families of Calabi-Yau manifolds with minimal length Yukawa coupling}, Comm. Math. Statist. \textbf{1}(1),   73-92 (2013).


\bibitem{SXZ2}
Sheng, M.,  J. Xu and  K. Zuo:   {\it The monodromy groups of Dolgachev's CY moduli spaces are Zariski dense},  Adv.  Math. \textbf{272}, 699-742 (2015).


\bibitem{Verbitsky}
Verbitsky, M.: {\it A global Torelli theorem for hyperkahler
manifolds}, Duke Math. J. \textbf{162}, 2929-2986 (2013).

\bibitem{Voisin}
Voisin, C.: {\it Th\'{e}or\`{e}me de Torelli pour les cubiques de $\mathbb{P}^5$}, Invent. Math. \textbf{86}, 577-601 (1986). Erratum in Invent. Math. \textbf{172}, 455-458 (2008).

















\end{thebibliography}
\end{document}